\documentclass[12 pt]{amsart}
\usepackage{amsmath,amssymb,amsthm}
\usepackage{longtable}
\usepackage{a4wide}
\pagestyle{plain}
\usepackage{color}
\usepackage{enumerate}
\usepackage{cleveref}
\usepackage{graphicx}

\newtheorem{thm}{Theorem}[section]
\newtheorem*{thm*}{Theorem}
\newtheorem*{conj*}{Conjecture}

\newtheorem{lem}[thm]{Lemma}
\newtheorem{prop}[thm]{Proposition}

\theoremstyle{remark}
\newtheorem{rem}[thm]{Remark}

\theoremstyle{definition}

\newtheorem{defn}[thm]{Definition}


\DeclareMathOperator{\Alt}{Alt}
\DeclareMathOperator{\Sym}{Sym}
\DeclareMathOperator{\Gal}{Gal}
\DeclareMathOperator{\Aut}{Aut}

\DeclareMathOperator{\Cay}{Cay}

\DeclareMathOperator{\Z}{Z}

\newcommand{\A}{\mathrm{A}}
\newcommand{\B}{\mathrm{B}}
\newcommand{\C}{\mathrm{C}}
\newcommand{\D}{\mathrm{D}}
\newcommand{\E}{\mathrm{E}}
\newcommand{\F}{\mathrm{F}}
\newcommand{\G}{\mathrm{G}}
\newcommand{\N}{\mathrm{N}}

\newcommand{\OO}{\mathrm{O}}

\newcommand{\FF}{\mathbb{F}}

\newcommand{\PSU}{\mathrm{PSU}}
\newcommand{\SU}{\mathrm{SU}}
\newcommand{\GL}{\mathrm{GL}}
\newcommand{\PSL}{\mathrm{PSL}}
\newcommand{\SL}{\mathrm{SL}}
\newcommand{\Sp}{\mathrm{Sp}}

\renewcommand{\geq}{\geqslant}
\renewcommand{\leq}{\leqslant}

\title[Finite simple groups that are CCA]{A finite simple group is CCA if and only if it has no element of order four}

\author{Luke Morgan}
\author{Joy Morris}
\author{Gabriel Verret}

\address{Luke Morgan and Gabriel Verret$^*$\\
Centre for the Mathematics of Symmetry and Computation, School of Mathematics and Statistics (M019)\\
The University of Western Australia\\
Crawley, 6009\\
Australia} 
\email{luke.morgan@uwa.edu.au}

\address{Joy Morris\\
Department of Mathematics and Computer Science\\
University of Lethbridge\\
Lethbridge, AB T1K 3M4\\
Canada}
\email{joy.morris@uleth.ca}

\address{$*$ Current address: Department of Mathematics, The University of Auckland, Private Bag 92019, Auckland 1142, New Zealand.}
\email{g.verret@auckland.ac.nz}	

\thanks{This research was supported by the Australian Research Council grants DE130101001 and DE160100081, by the Natural Science and Engineering Research Council of Canada, and by the Cheryl E. Praeger Visiting Research Fellowship from The University of Western Australia.}

\subjclass[2010]{Primary 05C25}
\keywords{CCA problem, Cayley graphs, edge-colouring, $2$-groups, finite simple groups}
\begin{document}

\begin{abstract}
A Cayley graph for a group $G$ is \emph{CCA} 
if every automorphism of the graph that preserves the edge-orbits under the regular representation of $G$ 
 is an element of the normaliser of $G$.  A group $G$ is then said to be  \emph{CCA} if every  connected Cayley graph  on $G$ is CCA.
We show that a finite simple group is CCA if and only if it has no element  of order 4. We also show that ``many'' $2$-groups 
are non-CCA. 
\end{abstract}

\maketitle

\section{Introduction}
All groups and all graphs in this paper are finite. Let $G$ be a group and let $S$ be an inverse-closed subset of $G$. The \emph{Cayley graph} $\Cay(G,S)$ of $G$ with respect to $S$ is the  graph  with vertex-set $G$ and, for every $g\in G$ and $s\in S$, an edge $\{g,sg\}$. This graph admits a natural  edge-colouring
in which an edge $\{g,sg\}$ is coloured $\{s,s^{-1}\}$. The colour-preserving automorphism group is denoted $\Aut_c(\Cay(G,S))$ and we define $\Aut_{\pm 1}(G,S)=\{\alpha \in \Aut(G) \mid s^\alpha \in \{s,s^{-1}\}$ for all $s \in S \}$. It is easy to see that $G_R \rtimes\Aut_{\pm 1}(G,S)\leq \Aut_c(\Cay(G,S))$, where $G_R$ is the right-regular representation of $G$ and, in fact, the former group is  precisely the normaliser of $G_R$ in $\Aut_c(\Cay(G,S))$.

\begin{defn}[\cite{FirstCCA}]
The Cayley graph $\Cay(G,S)$ is \emph{CCA} (Cayley colour automorphism) if $\Aut_c(\Cay(G,S))=G_R \rtimes\Aut_{\pm 1}(G,S)$. The group $G$ is  \emph{CCA} if every connected Cayley graph on $G$ is CCA.
\end{defn}

Thus, $\Cay(G,S)$ is CCA if and only if $G_R$ is normal in $\Aut_c(\Cay(G,S))$, c.f. \cite[Remark 6.2]{FirstCCA}. Note that $\Cay(G,S)$ is connected if and only if $S$ generates $G$. 

Previous results on the CCA problem  have focused on groups of odd order and, more generally, on solvable groups (see \cite{CCASquarefree} and \cite{FirstCCA}  for example).
In this paper, we will focus on two ends of the spectrum of groups: non-abelian
simple groups and  
$2$-groups.

In Section~\ref{prelim}, we introduce some basic terminology and previous results on the CCA  problem. In particular, Proposition~\ref{prop:gensets} is a condition from \cite{FirstCCA} that is sufficient to guarantee that a group is non-CCA. This condition requires the group to contain elements of order four. We also include various results that will allow us to apply this condition to many of the groups we study in this paper.

In Section~\ref{sec:Many2groups} we focus on 
$2$-groups. We show that a lower bound on the number of groups of order $2^n$ that are non-CCA has the same leading term as the total number of groups of order $2^n$. 
In Sections~\ref{sec:simple no 4} and ~\ref{sec:SimpleWith}  we 
prove the following.

\begin{thm}
\label{thm:simplegrouptheorem}
 A finite simple group is CCA if and only if it has no element of order four.
\end{thm}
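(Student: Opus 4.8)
The plan is to prove the two implications separately, organising the casework around the classification of finite simple groups (CFSG).

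\textbf{First implication: a finite simple group $G$ with an element of order four is non-CCA.} The idea is to realise $G$ as a group satisfying the hypotheses of Proposition~\ref{prop:gensets}, our sufficient condition for non-CCA-ness. Since that condition is phrased in terms of a suitable inverse-closed generating set built around an element of order four, the task reduces to producing such a generating set inside each relevant $G$, using the auxiliary results of Section~\ref{prelim}. Here I would exploit simplicity: the normal closure of an order-four element $x$ is all of $G$, so $x$ together with finitely many of its conjugates generates $G$, giving considerable freedom in choosing the remaining generators the proposition requires. I would then run through the CFSG families — the alternating groups $\Alt(n)$, the groups of Lie type, and the sporadic groups — checking that the configuration demanded by Proposition~\ref{prop:gensets} can be arranged. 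For $\Alt(n)$ the order-four permutations and their supports make this essentially explicit; the sporadic groups are a finite check; the groups of Lie type form the substantive case, and I would aim for a uniform treatment, locating the element of order four and its partner inside a small, well-understood section (for instance a suitable $\SL$ or $\Sp$ subquotient) common to the family.

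\textbf{Second implication: a finite simple group $G$ with no element of order four is CCA.} Having no element of order four is equivalent to the Sylow $2$-subgroups of $G$ being elementary abelian. By the Feit--Thompson theorem the only simple groups of odd order are the $\ZZ_p$, which are immediately CCA, while for the even-order case Walter's classification (a consequence of CFSG) reduces the non-abelian possibilities to a short list: $\PSL(2,2^n)$ with $n\geq 2$, $\PSL(2,q)$ with $q\equiv\pm 3\pmod 8$, the small Ree groups ${}^2G_2(3^{2n+1})$, and the sporadic group $J_1$. I would verify the CCA property for each entry. The work here runs the opposite way: rather than exhibiting a single bad graph, one must control $\Aut_c(\Cay(G,S))$ for \emph{every} connected Cayley graph and show that $G_R$ is always normal. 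The simplifying feature is that, with no element of order four, each non-involution $s$ contributes a genuine two-element colour class $\{s,s^{-1}\}$ while each involution contributes a singleton; I would combine this with the structural CCA criteria of Section~\ref{prelim} and the explicit subgroup structure of each family to exclude any colour-preserving automorphism lying outside $G_R\rtimes\Aut_{\pm1}(G,S)$.

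The main obstacle I anticipate is the first implication for the groups of Lie type: making the application of Proposition~\ref{prop:gensets} uniform across an infinite collection of families and ranks, rather than an unenlightening case-by-case computation. The difficulty is precisely that the generators accompanying the order-four element $x$ cannot all centralise $x^2$ — the centraliser of an involution is a proper subgroup of the simple group $G$ — so the generating set must be assembled with care, and one must then confirm that the resulting colour-permutation genuinely fails to normalise $G_R$. I would try to isolate a single lemma extracting the required configuration from the bare existence of an order-four element in a non-abelian simple group; if available, such a lemma would bypass most of the CFSG casework in this direction and leave only the short list of the second implication to be handled explicitly.
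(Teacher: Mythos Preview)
Your high-level split and your invocation of CFSG and Walter's theorem match the paper, but in each direction the concrete mechanism you propose misses the key idea. For ``element of order four $\Rightarrow$ non-CCA'', building $S$ from conjugates of an order-four element $x$ does not mesh with Definition~\ref{defn:non-cca set}: condition~(Aii) demands that every element of $S$ be centralised or inverted by $\tau=x^2$, while~(Aiv) demands $\langle S,\tau\rangle\neq G$; arbitrary conjugates of $x$ satisfy neither. The paper's device is instead to fix a maximal subgroup $H\ni\tau$, take $S=S_H(\tau)$ (so~(Aii) holds by Lemma~\ref{lem:stau} and~(Aiv) holds since $\langle S,\tau\rangle\le H$), and then locate $t\notin H$ with $t^2=\tau$. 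The work, done family by family, is choosing $H$ so that $\langle S_H(\tau)\rangle$ is all of $H$ (or at least has $H$ as its unique maximal overgroup) and so that such a $t$ exists outside $H$; for the Chevalley groups this is handled fairly uniformly via the Chevalley presentation with $H$ a maximal parabolic, but no single lemma eliminates the casework you hope for.

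For ``no element of order four $\Rightarrow$ CCA'', the paper does \emph{not} verify CCA for each entry of Walter's list. Theorem~\ref{thm:simpleNoFour} is a uniform argument: for any connected $\Gamma=\Cay(G,S)$ with colour-preserving group $A$, Lemma~\ref{coolnew} (absence of order-four elements in $S$) forces the kernel on $\OO_2(A)$-orbits to be semiregular, hence equal to $N:=\OO_2(A)$; Lemma~\ref{VxStab2Group} makes $|A:G|$ a $2$-power; Proposition~\ref{lem:2power} then gives $NG\trianglelefteq A$; and finally one shows $G$ centralises $N$ by comparing $|N|\le|P|$ (with $P$ a Sylow $2$-subgroup of $G$) against the minimal dimension of a faithful $\FF_2G$-module --- Lemma~\ref{lem:sl2mods} handles $\PSL(2,2^e)$, while the remaining Walter groups have $|P|\le 8$. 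A direct case-by-case check that every connected Cayley graph on, say, $J_1$ or ${}^2\G_2(q)$ is CCA would have no obvious handle; this structural chain is the essential content you are missing.
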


This theorem also holds for almost simple groups whose socle is either an alternating group or a Suzuki group; the proof of this is also 
in Section~\ref{sec:SimpleWith}.

The proof of Theorem~\ref{thm:simplegrouptheorem} involves some case-by-case analysis of  finite simple groups, and relies on their classification.

\section{Preliminaries}\label{prelim}
We begin by describing a sufficient criterion for a group to be non-CCA that appeared in~\cite{FirstCCA}. This criterion is surprisingly powerful, as will be made abundantly clear in Sections~\ref{sec:Many2groups} and~\ref{sec:SimpleWith}. We first need the following definition.

\begin{defn}
\label{defn:non-cca set}
Let $G$ be a group. A \emph{non-CCA triple} of $G$  is a triple $(S,T,\tau)$ where $S$ and $T$ are subsets of $G$ and $\tau$ is an involution in $G$ such that the following hold:
\begin{enumerate}[(i)]
\item[(Ai)] $G= \langle S \cup T \rangle$;
\item[(Aii)] $\tau$ inverts or centralises every element of $S$;
\item[(Aiii)] $t^2 = \tau$ for every $t\in T$;
\item[(Aiv)] $ \langle S \cup \{ \tau \} \rangle \neq G$;
\item[(Av)] either $\tau$ is non-central in $G$ or $|G : \langle S \cup \{ \tau \} \rangle | > 2$.
\end{enumerate}
\end{defn}

We may sometimes abuse notation and write $(S,t,\tau)$ for the non-CCA triple $(S,T,\tau)$ when $T=\{t\}$.

\begin{prop}[{\cite[Proposition 2.5]{FirstCCA}}]
\label{prop:gensets}
If $(S,T,\tau)$ is a non-CCA triple of $G$, then $\Cay(G, S \cup T)$ is connected and non-CCA and thus $G$ is non-CCA.
\end{prop}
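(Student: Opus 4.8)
The plan is to exhibit a single colour-preserving automorphism of $\Cay(G,S\cup T)$ that witnesses the failure of the CCA property. Connectivity is immediate: $S\cup T$ generates $G$ by (Ai), and the excerpt already records that a Cayley graph is connected exactly when its connection set generates. For the non-CCA part, write $A=\Aut_c(\Cay(G,S\cup T))$. Since $G_R$ is transitive we have $A=G_R\cdot A_1$, where $A_1$ denotes the stabiliser of the identity vertex, and the stabiliser of that vertex inside $G_R\rtimes\Aut_{\pm 1}(G,S\cup T)$ is precisely $\Aut_{\pm 1}(G,S\cup T)$. Hence it suffices to produce $\sigma\in A_1$ with $\sigma\notin\Aut_{\pm 1}(G,S\cup T)$; since $\Aut_{\pm 1}(G,S\cup T)\leq\Aut(G)$, it is enough that $\sigma$ fixes $1$, preserves all colours, and is not a group automorphism.

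The candidate is built from the subgroup $H=\langle S\cup\{\tau\}\rangle$ appearing in (Aiv)--(Av). I would define $\sigma$ to be the identity on $H$ and left multiplication by $\tau$ on $G\setminus H$. Because $\tau\in H$, left multiplication by $\tau$ preserves the set $G\setminus H$, so $\sigma$ is a well-defined bijection (in fact an involution) fixing $1$. The core calculation is that $\sigma$ preserves every edge-colour. For an edge $\{g,sg\}$ with $s\in S\subseteq H$, both endpoints lie on the same side of the partition $\{H,\,G\setminus H\}$; the $H$-side is fixed, while on the complement $\sigma(sg)=\tau sg=(\tau s\tau)(\tau g)=(\tau s\tau)\sigma(g)$ with $\tau s\tau\in\{s,s^{-1}\}$ by (Aii), so the colour $\{s,s^{-1}\}$ is preserved. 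For an edge $\{g,tg\}$ with $t\in T$ the two endpoints may straddle the partition, so all four cases must be checked; here the identity forced by (Aiii) is decisive: $t^2=\tau$ gives $t^4=1$ and $t\tau=\tau t$, whence $t^{-1}=\tau t=t\tau$. Running the four cases then yields $\sigma(tg)\in\{t,t^{-1}\}\cdot\sigma(g)$ each time, so the colour $\{t,t^{-1}\}$ is preserved. Thus $\sigma\in A_1$.

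It remains to show $\sigma\notin\Aut(G)$, and this is where (Aiv) and (Av) enter and where I expect the real work to lie. By (Aiv) we have $H\neq G$, so $\sigma$ is not the identity. Assume for contradiction that $\sigma\in\Aut(G)$. Testing the homomorphism property on the products $g\,g^{-1}=1$ and $hg$ for $g\in G\setminus H$ and $h\in H$ forces $\tau$ to commute with every element of $G$, that is $\tau\in\Z(G)$; this already contradicts (Av) in the case where $\tau$ is non-central. In the remaining case $\tau$ is central and (Av) gives $|G:H|>2$. A short counting argument then produces $g,g'\in G\setminus H$ with $gg'\in G\setminus H$: otherwise every product of two elements of $G\setminus H$ would lie in $H$, forcing $G\setminus H$ into a single coset $g^{-1}H$ and hence $|G:H|\leq 2$. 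Evaluating $\sigma(gg')=\sigma(g)\sigma(g')$ with $\tau$ central and $\tau^2=1$ now gives $\tau gg'=gg'$, so $\tau=1$, contradicting that $\tau$ is an involution.

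Putting this together, $\sigma$ is a colour-preserving automorphism fixing $1$ that is not a group automorphism, so $\sigma\notin G_R\rtimes\Aut_{\pm 1}(G,S\cup T)$ and $\Cay(G,S\cup T)$ is non-CCA; being connected, it exhibits $G$ as non-CCA. The main obstacle is the last paragraph: pinning down exactly how (Aiv) and (Av) forbid $\sigma\in\Aut(G)$, and in particular handling the central-$\tau$ subcase, where one genuinely needs the index constraint $|G:H|>2$ rather than merely $H\neq G$. By contrast, the colour-preservation of $\sigma$ is a mechanical case check once the identity $t^{-1}=\tau t$ is in hand.
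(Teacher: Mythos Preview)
The paper does not give its own proof of this proposition; it is quoted verbatim from \cite{FirstCCA}. Your argument is correct and is essentially the construction used in that reference: the map that is the identity on $H=\langle S\cup\{\tau\}\rangle$ and left multiplication by $\tau$ on $G\setminus H$ is the standard witness, and your verification that it is colour-preserving (via $t^{-1}=\tau t$) and not a group automorphism (via the dichotomy in (Av)) is exactly how the proof goes.
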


For $G$ a group and $\tau$ an involution of $G$, we set 
$$S_G(\tau) = (C_G(\tau) \cup \{ y\tau \mid y\in G\text{ and }y^2=1 \})-\{1\}.$$

\begin{rem}
\label{rem:invs}The subgroup $\langle S_G(\tau)\rangle$ contains every involution of $G$, and thus contains the normal subgroup of $G$ generated by the set of involutions of $G$.
\end{rem}
The following lemmas  prove useful in allowing us to apply Proposition~\ref{prop:gensets}. 

\begin{lem}
\label{lem:stau}
 If $G$ is a group with an involution $\tau$, then
$$S_G(\tau)=\{x \in G \mid x^\tau \in \{ x, x^{-1}\} \}.$$
\end{lem}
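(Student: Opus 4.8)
The plan is to prove the set equality $S_G(\tau)=\{x \in G \mid x^\tau \in \{x, x^{-1}\}\}$ by establishing containment in both directions. I would denote the right-hand set by $R = \{x \in G \mid x^\tau \in \{x, x^{-1}\}\}$ and observe at the outset that $x^\tau = x$ is equivalent to $x \in C_G(\tau)$, so the content of the claim is really about understanding the elements satisfying $x^\tau = x^{-1}$.

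First I would prove $S_G(\tau) \subseteq R$. The set $S_G(\tau)$ is built from two pieces (together with removing the identity). For $x \in C_G(\tau)$ we have $x^\tau = x$, so such $x$ lies in $R$ immediately. For an element of the form $x = y\tau$ with $y^2 = 1$, I would compute $x^\tau = \tau^{-1}(y\tau)\tau = \tau y \tau \cdot \tau = \tau y \tau$ more carefully: conjugating, $x^\tau = \tau^{-1} y \tau \cdot \tau^{-1}\tau\tau$ — here the key is to verify by direct substitution that $(y\tau)^\tau = (y\tau)^{-1}$, using $y^2 = 1$, $\tau^2 = 1$, and hence that $(y\tau)^{-1} = \tau^{-1} y^{-1} = \tau y$. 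The calculation should confirm $x^\tau \in \{x, x^{-1}\}$, placing $x$ in $R$.

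For the reverse containment $R \subseteq S_G(\tau)$, I would take $x$ with $x^\tau \in \{x, x^{-1}\}$ and split on the two cases. If $x^\tau = x$ then $x \in C_G(\tau)$, so (assuming $x \neq 1$) it lies in $S_G(\tau)$ by the first part of the defining union. The interesting case is $x^\tau = x^{-1}$: here I want to exhibit $x$ in the form $y\tau$ with $y^2 = 1$, i.e.\ I set $y = x\tau^{-1} = x\tau$ and must check $y^2 = 1$. Expanding $y^2 = x\tau x \tau$ and rewriting $\tau x \tau = x^\tau = x^{-1}$ gives $y^2 = x \cdot x^{-1} = 1$, as needed, so $x = y\tau \in S_G(\tau)$.

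The argument is essentially a pair of short conjugation computations, and I do not anticipate a serious obstacle — the main thing to get right is the bookkeeping with the removed identity element and confirming that the substitution $y = x\tau$ produces an involution precisely when $x^\tau = x^{-1}$. The only subtlety worth flagging is ensuring the two descriptions of $S_G(\tau)$ do not overlap in a way that loses elements: an element may simultaneously centralise $\tau$ and be of the form $y\tau$, but since we are proving an equality of sets this causes no difficulty, as membership in $R$ is all that must be matched.
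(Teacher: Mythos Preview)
Your approach is essentially identical to the paper's: both directions are handled by the same short conjugation computations, namely showing $(y\tau)^\tau=(y\tau)^{-1}$ when $y^2=1$, and conversely setting $y=x\tau$ and checking $y^2=1$ when $x^\tau=x^{-1}$. Your flag about the identity element is apt (the paper glosses over it), but otherwise there is nothing to add.
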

\begin{proof}
By definition,  $x^\tau=x$ if and only if $x\in C_G(\tau)$. If $u=y\tau$ with $y^2=1$, then $u^\tau = \tau y\tau \tau = \tau y = \tau^{-1} y^{-1} = (y\tau)^{-1} = u^{-1}$. In the other direction, if
$x^\tau = x^{-1}$, then $(x\tau)^2=1$ hence $x = (x\tau)\tau  \in S_G(\tau)$, as required.
\end{proof}

\begin{rem}
Lemma~\ref{lem:stau}  shows that (Aii) of Definition~\ref{defn:non-cca set} holds whenever we use some $S_G(\tau)$ as the first entry of a putative non-CCA triple, with $\tau$ as the final entry.
This fact will be used repeatedly throughout Section~\ref{sec:SimpleWith},  usually  without explicit reference.
\end{rem}
%
%

We now  state two results on colour-preserving automorphisms of Cayley graphs.

\begin{lem}[{\cite[Lemma~6.3]{FirstCCA}}]\label{VxStab2Group}
The vertex-stabiliser in the colour-preserving group of automorphisms of a connected Cayley graph is a $2$-group.
\end{lem}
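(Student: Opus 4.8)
The plan is to combine a purely local observation about colour-preserving automorphisms with a standard connectivity (ball-stabiliser) argument. Write $\Gamma = \Cay(G,S)$, let $A = \Aut_c(\Gamma)$ be the colour-preserving group, and for a vertex $v$ let $A_v$ be its stabiliser and $\Gamma(v) = \{sv \mid s \in S\}$ its neighbourhood; denote by $A_v^{\Gamma(v)}$ the permutation group that $A_v$ induces on $\Gamma(v)$. The goal is to show $|A_v|$ is a power of $2$.

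First I would establish the \emph{local claim}: $A_v^{\Gamma(v)}$ is an elementary abelian $2$-group. Fix $\alpha \in A_v$ and $s \in S$. The edge $\{v, sv\}$ has colour $\{s, s^{-1}\}$, and since $\alpha$ preserves colours and fixes $v$, it maps this edge to an edge of the same colour incident to $v$. A neighbour $x = s'v$ of $v$ carries edge-colour $\{s', s'^{-1}\}$, which equals $\{s, s^{-1}\}$ exactly when $x \in \{sv, s^{-1}v\}$; hence $(sv)^\alpha \in \{sv, s^{-1}v\}$. Thus $\alpha$ either fixes both members of each pair $\{sv, s^{-1}v\}$ (and it necessarily fixes $sv$ when $s^2 = 1$) or transposes them, so $A_v^{\Gamma(v)}$ embeds into the direct product of copies of the transposition group $C_2$ indexed by the pairs $\{sv, s^{-1}v\}$ with $s^2 \neq 1$. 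Since $G_R \leq A$ acts transitively on the vertices, conjugating by a suitable element of $G_R$ shows that $A_w^{\Gamma(w)}$ is likewise a $2$-group for every vertex $w$.

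Next I would bootstrap to all of $A_v$ using connectivity. For $r \geq 0$ let $B_r$ be the ball of radius $r$ about $v$, and let $A_v^{[r]}$ be the pointwise stabiliser of $B_r$ in $A$; then $A_v^{[0]} = A_v$, and because $\Gamma$ is finite and connected and no nontrivial automorphism fixes every vertex, $A_v^{[d]} = 1$ once $d$ exceeds the diameter. Each $A_v^{[r+1]}$ is the kernel of the action of $A_v^{[r]}$ on $B_{r+1}$, hence is normal in $A_v^{[r]}$. Every vertex of $B_{r+1}\setminus B_r$ is a neighbour of some vertex $w$ at distance $r$ from $v$, and such $w$ is fixed by $A_v^{[r]}$; therefore the map sending $\beta \in A_v^{[r]}$ to the tuple $(\beta|_{\Gamma(w)})_{w}$ of its restrictions, over vertices $w$ at distance $r$, is an injective homomorphism from $A_v^{[r]}/A_v^{[r+1]}$ into $\prod_w A_w^{\Gamma(w)}$, a product of $2$-groups. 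Hence each factor $A_v^{[r]}/A_v^{[r+1]}$ is a $2$-group, and the resulting normal series forces $|A_v|$ to be a power of $2$.

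The genuinely easy part is the local claim; the step requiring the most care is the ball-stabiliser chain, where I must verify that the restriction map on $A_v^{[r]}/A_v^{[r+1]}$ is well defined and injective (using that $B_{r+1}\setminus B_r \subseteq \bigcup_w \Gamma(w)$ with $w$ ranging over the distance-$r$ sphere) and that transitivity of $A$ upgrades the single-vertex local claim to all of the vertices $w$ appearing in the product. None of these is a serious obstacle, but the bookkeeping of spheres and the normality of successive stabilisers is where the argument must be stated precisely.
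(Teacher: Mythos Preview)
The paper does not actually prove this lemma; it merely cites it from \cite{FirstCCA}. Your argument is correct and is precisely the standard proof one would expect to find there: the local observation that a colour-preserving automorphism fixing $v$ can only fix or swap each pair $\{sv,s^{-1}v\}$, followed by the usual ball-stabiliser filtration to propagate this along a connected graph. One minor remark: the appeal to transitivity of $G_R$ is harmless but unnecessary, since the local claim applies verbatim at any vertex $g$ (the neighbours of $g$ are $\{sg : s\in S\}$ with edge $\{g,sg\}$ coloured $\{s,s^{-1}\}$, so the same two-point orbit argument works).
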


\begin{lem}[{\cite[Lemma 2.4]{cca1}}]\label{coolnew}
Let $\Gamma=\Cay(G,S)$, let $A$ be a colour-preserving group of automorphisms of $\Gamma$, let $N$ be a normal $2$-subgroup of $A$ and let $K$ be the kernel of the action of $A$ on the $N$-orbits. If $K_v\neq 1$ for some $v\in \Gamma$, then $S$ contains an element of order four.
\end{lem}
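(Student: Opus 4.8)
The plan is to argue by tracking a nontrivial kernel element along a single colour class until it is forced to interchange two neighbours of a vertex, and then to use the $2$-group $N$ to control the order of the corresponding generator. Assuming $\Gamma$ connected, Lemma~\ref{VxStab2Group} tells us that the vertex-stabiliser $A_v$ is a $2$-group; since $K_v\le A_v$, I may replace the given nontrivial element of $K_v$ by an involution $k\in K_v$. As $k\neq 1$ and $\Gamma$ is connected, there is an edge joining a vertex $w$ fixed by $k$ to a neighbour that $k$ moves. Writing $\{s,s^{-1}\}$ for the colour of that edge, the two colour-$\{s,s^{-1}\}$ neighbours of $w$ are $sw$ and $s^{-1}w$; since $k$ fixes $w$, preserves colours, and moves one of these, it must interchange them, so $s\neq s^{-1}$. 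Propagating this around the colour-$\{s,s^{-1}\}$ cycle through $w$ gives $k(s^jw)=s^{-j}w$ for all $j$, so $k$ acts on that cycle, of length $|s|$, as the reflection fixing $w$.

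Next I would use that $k$ lies in the kernel $K$: since $k$ fixes the $N$-orbit of $sw$ setwise and sends $sw$ to $s^{-1}w$, the vertices $sw$ and $s^{-1}w$ share an $N$-orbit, so there is $n\in N$ with $n(sw)=s^{-1}w$. Note $\langle N,k\rangle$ is again a $2$-group (it contains $N$ with index at most two, as $k$ normalises $N$) and, because $k$ fixes every $N$-orbit, its orbits coincide with those of $N$. Any automorphism carrying one vertex of the cycle to another stabilises the cycle setwise, so $n$ and $k$ both restrict to the dihedral automorphism group of the $|s|$-cycle. Here the argument splits according to whether some such $n$ restricts to a nontrivial \emph{rotation} of the cycle or only to the same reflection as $k$: in the first case one obtains a genuine rotation inside the $2$-group $\langle N,k\rangle$, forcing $|s|$ to be a power of two and placing alternate cycle-vertices in a common block; in the second case the block of $w$ meets the cycle only in $w$, and the antipodal vertex $s^{|s|/2}w$ is similarly isolated in its block.

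The crux — and the step I expect to be the main obstacle — is upgrading these constraints to the exact conclusion $|s|=4$, which is what produces an element of order four in $S$. I would attempt this by a minimality argument: among all involutions $k\in K_v$ and all escaping edges, choose one making the relevant colour cycle as short as possible, and show that a cycle of length $2^a$ with $a\ge 3$ would, via the rotations just produced together with the way the $N$-orbits cut the cycle, furnish a kernel element fixing a vertex and interchanging a \emph{closer} pair of neighbours on a strictly shorter colour cycle, contradicting minimality. Once the cycle has length four, the generator $s\in S$ has order four and we are done. I expect essentially all of the genuine difficulty to sit in this final reduction, since it is precisely where the interplay between the $2$-group $N$, its block system, and a single colour class must be combined to exclude the longer $2$-power lengths; the earlier steps are formal.
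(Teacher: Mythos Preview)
The paper does not prove this lemma: it is quoted verbatim from \cite{cca1} and no argument is supplied here, so there is no ``paper's own proof'' to compare your attempt against.

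On the merits of your attempt itself: the setup is fine and gets you, cleanly, to an $s\in S$ with $s\neq s^{-1}$ and with $sw$ and $s^{-1}w$ in the same $N$-orbit. From this one can in fact argue directly, bypassing the dihedral case-split: the $N$-orbit containing the identity vertex is a subgroup $H\le G$ whose order divides $|N|$ and is therefore a power of $2$; the relation $Hsw=Hs^{-1}w$ gives $s^{2}\in H$, so $|s|$ is a power of $2$ and, since $s\neq s^{-1}$, we get $|s|\ge 4$. That is a complete, short proof of the weaker conclusion that $S$ contains an element whose order is a multiple of four.

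The genuine gap in your proposal is the final ``minimality'' reduction to $|s|=4$, which you yourself flag as the hard step. As written it cannot succeed, because the statement as recorded here is already false without some extra hypothesis. Take $G=\langle s\rangle$ cyclic of order $2^{a}$ with $a\ge 3$, $S=\{s,s^{-1}\}$, $A=\mathrm{Dih}_{2^{a+1}}$ the full (colour-preserving) automorphism group of the $2^{a}$-cycle, and $N=\langle (s^{2})_{R}\rangle\trianglelefteq A$. There are exactly two $N$-orbits, the kernel $K$ has $K_{1}\cong C_{2}$, yet $S$ contains no element of order four. In this example there is a single colour and every colour cycle has length $2^{a}$, so your proposed descent to a strictly shorter colour cycle has nowhere to go. Either an additional hypothesis from \cite{cca1} has been suppressed in the restatement, or the intended conclusion is ``order divisible by four'' (which is what the applications in this paper actually need, since the groups considered there have elementary abelian Sylow $2$-subgroups); in either case your minimality scheme, as stated, does not close the argument.
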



\section{Many $2$-groups are not CCA}\label{sec:Many2groups}

Abelian CCA groups were determined in~\cite[Proposition 4.1]{FirstCCA}. From this classification, it follows that, while the number of abelian CCA groups of order $2^n$ increases with $n$, almost all abelian $2$-groups are non-CCA.
We are not able to prove a result quite this strong for all $2$-groups but, using a slightly modified version of an argument of Higman, we get the following.

\begin{thm}\label{Many2Groups}
There are at least $2^{\frac{2}{27}n^3+O(n^2)}$ pairwise non-isomorphic groups of order $2^n$ that are non-CCA.
\end{thm}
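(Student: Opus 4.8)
The plan is to follow Higman's classical lower bound for the number of groups of order $2^n$, but to arrange that every group produced by the construction admits a non-CCA triple in the sense of Definition~\ref{defn:non-cca set}, so that non-CCA-ness follows from Proposition~\ref{prop:gensets}. Recall that Higman's bound is obtained by counting class-$2$ groups $G$ with $\Phi(G)=G'=\Z(G)$ elementary abelian (hence of exponent dividing $4$): writing $V=G/\Z(G)\cong\FF_2^{d}$ and $W=\Z(G)\cong\FF_2^{m}$ with $n=d+m$, such a group is encoded by its commutator map, an alternating bilinear form $\beta\colon V\times V\to W$, together with its squaring map, a quadratic map $q\colon V\to W$ whose polar form is $\beta$. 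Restricting to surjective $\beta$ with trivial radical, the number of $\GL(V)\times\GL(W)$-equivalence classes of such forms is at least $2^{m\binom{d}{2}-d^2-m^2}$, and choosing $d\approx 2n/3$, $m\approx n/3$ makes this exponent $\tfrac{2}{27}n^3+O(n^2)$; since $\beta$ is recovered from $G$ (up to this action) as the commutator form on $G/\Z(G)$, inequivalent forms yield non-isomorphic groups.

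To force these groups to be non-CCA, I would fix two independent vectors $v_1,v_2\in V$ and a nonzero $\tau\in W$, and for each admissible $\beta$ select the squaring map $q$ so that $q(v_1)=q(v_2)=\tau$. This is always possible: the admissible $q$ (those with polar form $\beta$) form a coset of $\mathrm{Hom}(V,W)$, and since $v_1,v_2$ are independent the assignment $q\mapsto(q(v_1),q(v_2))$ maps this coset onto $W\times W$. Let $t_1,t_2\in G$ be lifts of $v_1,v_2$, note that $t_i^2=q(v_i)=\tau$ is independent of the chosen lift, identify $\tau$ with the corresponding central involution, and let $S$ consist of lifts of a basis of a complement to $\langle v_1,v_2\rangle$ in $V$. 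Then $(S,\{t_1,t_2\},\tau)$ should be a non-CCA triple: condition (Ai) holds because the images of $S\cup\{t_1,t_2\}$ span $V=G/\Phi(G)$, whence $\langle S\cup\{t_1,t_2\}\rangle=G$ by the Frattini property; (Aii) is automatic since $\tau\in\Z(G)$; (Aiii) holds by the choice of $q$; and since $\tau\in\Phi(G)$ the subgroup $\langle S\cup\{\tau\}\rangle$ maps into the codimension-$2$ subspace $\langle v_3,\dots,v_d\rangle$ of $V$, which gives (Aiv) and, its index in $G$ being at least $4>2$, also (Av). Proposition~\ref{prop:gensets} then shows that $G$ is non-CCA.

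The main point to verify — and the step I expect to be the genuine obstacle — is that imposing this extra structure does not erode the leading term of the count. First, one checks that restricting to surjective $\beta$ with trivial radical excludes only a $2^{-\Omega(n^2)}$ fraction of all alternating forms: for a fixed nonzero $v$ to lie in the radical is roughly $m(d-1)$ linear conditions, and a union bound over the $2^d-1$ candidate vectors leaves almost all forms, so the number of qualifying equivalence classes is still at least $2^{m\binom{d}{2}}/2^{O(n^2)}=2^{\frac{2}{27}n^3+O(n^2)}$; the free choice of $q$ subject to two prescribed values changes nothing here, since $q$ is auxiliary data not used to distinguish the groups. Second, one confirms that inequivalent $\beta$ still give non-isomorphic groups, which holds because the commutator form on $G/\Z(G)$ is an isomorphism invariant that does not depend on $q$. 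Combining the count with the non-CCA property yields at least $2^{\frac{2}{27}n^3+O(n^2)}$ pairwise non-isomorphic non-CCA groups of order $2^n$; since by the theorems of Higman and Sims this is also the leading term of the total number of groups of order $2^n$, the estimate is essentially optimal.
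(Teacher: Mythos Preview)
Your proposal is correct and follows essentially the same approach as the paper: both modify Higman's class-$2$ construction by prescribing two of the generators to have the same central square $\tau$, then exhibit $(S,\{t_1,t_2\},\tau)$ as a non-CCA triple and note that the extra constraint costs only a factor $2^{O(n^2)}$ in the count. The only difference is in packaging: the paper works directly with Higman's explicit presentation (setting $g_{r-1}^2=g_r^2=h_1$) and quotes Higman's bound on the number of parameter choices yielding isomorphic groups, whereas you phrase the same construction via the commutator form $\beta$ and the squaring map $q$, restrict to $\beta$ with trivial radical and surjective image so that $\Z(G)=W$ becomes an isomorphism invariant, and then bound the number of $\GL(V)\times\GL(W)$-orbits of such $\beta$ directly.
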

\begin{proof}
We assume that $n\geq 3$, and follow the account by Sims~\cite[pg.151--152]{Sims} of a result of Higman~\cite[Theorem~2.1]{Higman}. Let $r$ and $s$ be positive integers such that $r+s=n$. 
  For  $1\leq i\leq r$ and  $1\leq j\leq s$, let $b(i,j) \in \{0,1\}$. For  $1\leq i<j\leq r$ and $1\leq k\leq s$, let  $c(i,j,k) \in \{0,1\}$. The relations 


\medskip\begin{center}
\begin{tabular}{rclr}
 $h_i^2$&$=$&$1$, &  $1\leq i\leq s$,    \\
$[h_i,h_j]$&$=$&$1$, &  $1\leq i\leq j\leq s$,   \\
 $[g_i,h_j]$&$=$&$1$, &  $1\leq i\leq r$, $1\leq j\leq s$,    \\
 $g_i^2$&$=$&$h_1^{b(i,1)}\cdots h_s^{b(i,s)}$, &  $1\leq i\leq r$,    \\
 $[g_i,g_j]$&$=$&$h_1^{c(i,j,1)}\cdots h_s^{c(i,j,s)}$, &  $1\leq i<j\leq r$,    \\
\end{tabular}
\end{center}
\medskip \noindent
on $\{g_1,\ldots,g_r,h_1,\ldots,h_s\}$ define a group of order $2^n$. The number of ways of choosing the $b(i,j)$s and the $c(i,j,k)$s is $2^{{r\choose 2}s+rs}$ which, if we take $r=\lfloor 2n/3\rfloor$, is $2^{2n^3/27+O(n^2)}$.
Moreover, Higman showed that the number of choices of the $b(i,j)$s and the $c(i,j,k)$s which determine isomorphic groups is $2^{O(n^2)}$.

We now add the extra requirement that $g_r^2=g_{r-1}^2=h_1$. This completely determines $b(r,j)$ and $b(r-1,j)$ for every $j\in\{1,\ldots,s\}$. The number of ways of choosing the parameters is now $2^{{r\choose 2}s+(r-2)s}$ which is again $2^{2n^3/27+O(n^2)}$ for $r=\lfloor 2n/3\rfloor$ and, by Higman's result, we still get $2^{2n^3/27+O(n^2)}$ pairwise non-isomorphic groups.

Let $G=\langle g_1,\ldots,g_r,h_1,\ldots,h_s\rangle$ be such a group.   Let $S=\{g_1,\ldots,g_{r-2},h_1,\ldots,h_s\}$, let $\tau=h_1$ and let $T=\{g_{r-1},g_r\}$. We show that $(S,T,\tau)$ is a non-CCA triple and thus $G$ is not CCA by Proposition~\ref{prop:gensets}. Clearly, $G= \langle S \cup T \rangle$. Moreover, $\tau$ is central in $G$ and $g_{r-1}^2=g_r^2=\tau$.  Finally, let $X=\langle S\cup \{\tau\}\rangle$ and let $H=\langle h_1,\ldots,h_s\rangle$. Note that $H\leq X$ and that $G/H$ is an elementary abelian $2$-group of order $2^r$ with $\{Hg_1,\ldots, Hg_r\}$ forming a basis. This implies that $|G:X|=4$ and thus $(S,T,\tau)$ is a non-CCA triple.
\end{proof}

\begin{rem}
Note that, by~\cite{Sims},  the  number of groups of order $2^n$ is  $2^{\frac{2}{27}n^3+O(n^{8/3})}$. Still, Theorem~\ref{Many2Groups} falls short of proving that almost all $2$-groups are non-CCA, although this seems likely to be the case.
\end{rem}

\section{Simple groups with no element of order four}\label{sec:simple no 4}

In this section, we show that simple groups with no element of order four are CCA.  It is easy to see that cyclic groups of prime order are CCA. (See \cite[Proposition 4.1]{FirstCCA} or  ~\cite{Joy-CCA}. This can also be seen as a  consequence of Burnside's Theorem~\cite[Theorem 3.5A]{Dixon-Mort}.) We therefore restrict our attention to non-abelian simple groups with no element of order four. Such groups were classified by Walter.




\begin{thm}[\cite{Walter}]\label{WalterTheorem}
A non-abelian simple group has no element of order four if and only if it is isomorphic to one of the following:
\begin{itemize}
\item $\PSL(2,2^e)$, $e\geq 2$,
\item $\PSL(2,q)$, $q \equiv\pm3\pmod 8$, $q\geq 5$,
\item a Ree group $^2\G_2(3^{2n+1})$, $n\geq 1$,
\item the Janko group $J_1$.
\end{itemize}
\end{thm}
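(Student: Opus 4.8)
The statement is an equivalence, and I would split it accordingly. The reverse direction is a direct verification: ``no element of order four'' is equivalent to ``a Sylow $2$-subgroup is elementary abelian'', so for each of the four families it suffices to exhibit an elementary abelian Sylow $2$-subgroup. For $\PSL(2,2^e)$ the group of upper unitriangular matrices maps onto a Sylow $2$-subgroup isomorphic to the additive group of $\FF_{2^e}$; for $\PSL(2,q)$ with $q\equiv\pm3\pmod8$ one checks from $|\PSL(2,q)|=q(q^2-1)/2$ that the $2$-part equals $4$ and that the corresponding subgroup is a Klein four-group; and for the Ree groups and for $J_1$ it is classical that a Sylow $2$-subgroup is elementary abelian of order $8$. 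This disposes of the reverse direction and recasts the forward direction as the classification of nonabelian simple groups whose Sylow $2$-subgroups are elementary abelian.

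The forward direction is the substantial one, and I would organise it around the centraliser of an involution together with fusion control. Fix an involution $t$ lying in a Sylow $2$-subgroup $S$, which is elementary abelian by hypothesis. Since $S$ is abelian, Burnside's fusion theorem tells us that $G$-fusion of elements of $S$ is controlled by $\norm{G}{S}$, so the conjugacy pattern of involutions is governed by the action of $\norm{G}{S}/\cent{G}{S}$ on $S$. If $S$ has rank one, then $S=\grp{t}$ and Burnside's normal $p$-complement theorem forces a normal $2$-complement, contradicting simplicity; hence $S$ has rank at least two.

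The decisive dichotomy is whether or not $G$ possesses a strongly embedded subgroup. If it does, then since $S$ has rank at least two I would invoke Bender's classification of groups with a strongly embedded subgroup, whose simple members are $\PSL(2,2^e)$, the Suzuki groups $\mathrm{Sz}(2^e)$, and $\PSU(3,2^e)$; of these only $\PSL(2,2^e)$ has abelian Sylow $2$-subgroups (the Suzuki and unitary groups have Sylow $2$-subgroups of exponent four), which yields the first family. If there is no strongly embedded subgroup, then Glauberman's $Z^*$-theorem together with the fusion analysis above pins down the isomorphism type of $\cent{G}{t}$, and I would then feed this into the relevant centraliser-of-involution characterisations: the Gorenstein--Walter classification of groups with dihedral Sylow $2$-subgroups covers the rank-two (Klein four) case, its simple conclusions being $\PSL(2,q)$ and $\Alt(7)$, of which only $\PSL(2,q)$ with $q\equiv\pm3\pmod8$ is free of elements of order four; Janko's original characterisation identifies $J_1$ from $\cent{G}{t}\cong\ZZ_2\times\Alt(5)$; and the analogous characterisation identifies the Ree groups from $\cent{G}{t}\cong\ZZ_2\times\PSL(2,3^{2n+1})$.

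The main obstacle is precisely this last step in the rank-three, no-strongly-embedded case: reconstructing the global group from its involution centraliser. Establishing that a simple group with $\cent{G}{t}\cong\ZZ_2\times\PSL(2,3^{2n+1})$ must be a Ree group is notoriously delicate---it was the hardest component of the historical argument, completed only through the work of Thompson and Bombieri on the defining field automorphism---and an honest account of it cannot be compressed into a routine calculation. I would therefore isolate and cite this characterisation rather than attempt to reprove it, treating the remainder of the forward direction as a stratification by $2$-rank that funnels the low-rank cases into the classical involution-centraliser characterisations above.
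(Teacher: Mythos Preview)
The paper does not prove this theorem at all: it is stated with a citation to Walter~\cite{Walter} and used as a black box, so there is no proof in the paper to compare against.

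Your sketch is a reasonable modern outline of how one would approach Walter's classification. The reverse direction is indeed routine, and your reduction of the forward direction to the classification of simple groups with elementary abelian Sylow $2$-subgroups is correct. A couple of remarks on the forward direction: your organisation via the strongly embedded dichotomy is slightly anachronistic (Bender's theorem postdates Walter's paper), though it is a perfectly valid way to structure a contemporary proof. In the rank-two case you do not need the full Gorenstein--Walter dihedral classification, since the Sylow $2$-subgroup is already Klein four rather than an arbitrary dihedral group; but of course the Klein four case is subsumed. You are right to flag the Ree-group recognition as the genuine obstacle: that step (Ward, Janko--Thompson, and ultimately Bombieri) is the heart of the matter and cannot be compressed, so citing it is the only honest option in a sketch of this length. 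Overall your proposal is a sound roadmap, but since the paper simply quotes the result, any comparison of approaches is moot.
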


We will need the following result concerning the dimensions of irreducible $\FF_2$-modules for $\PSL(2,2^e)$.


\begin{lem}
\label{lem:sl2mods}
Let $G=\PSL(2,2^e)$ and let $W$ be a non-trivial irreducible $\FF_2G$-module. Then $\dim_{\FF_2}(W) \geqslant 2e$.
\end{lem}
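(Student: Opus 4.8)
The plan is to pass to the algebraic closure, invoke the known description of the irreducible modules for $\SL(2,2^e)$ in the defining characteristic, and then read off the bound from a short combinatorial estimate. First, since $2^e$ is even the centre of $\SL(2,2^e)$ is trivial, so $\PSL(2,2^e)=\SL(2,2^e)$ and I may work with $G=\SL(2,2^e)$ together with its natural $2$-dimensional module $N$ over $\ba{\FF}_2$. The endomorphism ring $\mathrm{End}_{\FF_2G}(W)$ is a finite field $\FF_{2^\ell}$ because $W$ is irreducible, and the standard Galois-descent argument then shows that $\ba{W}:=\ba{\FF}_2\otimes_{\FF_2}W$ is the direct sum of the $\ell$ pairwise non-isomorphic Frobenius twists $M,M^{(2)},\dots,M^{(2^{\ell-1})}$ of a single absolutely irreducible $\ba{\FF}_2G$-module $M$, all of the same dimension. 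Hence $\dim_{\FF_2}W=\ell\cdot\dim_{\ba{\FF}_2}M$, where $\ell$ is exactly the size of the orbit of $M$ under the Frobenius twist.

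Next I would describe $M$ via Steinberg's tensor product theorem. The non-trivial absolutely irreducible $\ba{\FF}_2G$-modules are precisely the modules $M_J=\bigotimes_{j\in J}N^{(2^j)}$, indexed by the non-empty subsets $J\subseteq\{0,1,\dots,e-1\}$, with $\dim_{\ba{\FF}_2}M_J=2^{|J|}$; moreover the Frobenius twist acts on them by the cyclic shift $J\mapsto J+1\pmod e$, since $N^{(2^e)}\cong N$. Thus, writing $M=M_J$, the orbit size $\ell$ is the period of $J$ under this shift, so $\ell\mid e$, and the $\ell$-periodicity of $J$ forces $J$ to contain the entire shift-by-$\ell$ orbit of each of its elements; as each such orbit has size $e/\ell$, we get $|J|\ge e/\ell$.

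Putting this together and writing $m=e/\ell$, so that $\ell=e/m$, yields
\[
\dim_{\FF_2}W=\ell\cdot 2^{|J|}\ge \frac{e}{m}\,2^{m}\ge \frac{e}{m}\cdot 2m = 2e,
\]
where the last step uses the elementary inequality $2^{m}\ge 2m$ (equivalently $2^{m-1}\ge m$), valid for every integer $m\ge 1$. I expect the main obstacle to be the bookkeeping in the second and third paragraphs, namely correctly translating the Frobenius twist into the cyclic shift on subsets and extracting the periodicity bound $|J|\ge e/\ell$: this is the genuinely non-formal content, since it encodes the crucial trade-off that a low-dimensional constituent (small $|J|$) is forced to sit in a long Frobenius orbit (large $\ell$), and the inequality $2^m\ge 2m$ is exactly what prevents the product $\ell\cdot 2^{|J|}$ from ever dropping below $2e$. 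Finally, the bound is sharp, being attained by $N$ itself (where $J$ is a singleton, $\ell=e$, and $m=1$), so the constant $2e$ cannot be improved.
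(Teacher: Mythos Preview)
Your proof is correct and follows essentially the same route as the paper's: extend scalars and decompose into a Frobenius orbit of absolutely irreducible summands, identify each summand via the Steinberg tensor product theorem as $\bigotimes_{j\in J}N^{(2^j)}$, use that being defined over the smaller field forces $J$ to be periodic under the cyclic shift, and conclude with the inequality $f\cdot 2^{e/f}\ge 2e$ (your $\ell$ is the paper's $f$). The only cosmetic differences are that the paper extends to the minimal splitting field $\FF_{2^f}$ and cites \cite{asch} and \cite{colva} for the descent and the Steinberg/Brauer--Nesbitt description, whereas you go to $\ba{\FF}_2$ and phrase the periodicity more explicitly via the subset $J$; the underlying argument is the same.
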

\begin{proof}
Let $K=\FF_2$ and set $L=\FF_{2^f}$ where $f$ is minimal such that  $W^L:= L \otimes_{K}W$  is a sum of absolutely irreducible $LG$-modules.  Note that $f$ divides $e$  since $\FF_{2^e}$ is a splitting field for $G$. Using \cite[(26.2)]{asch} and the notation from loc.~cit.~we have
$$W^L = \bigoplus_{i=1}^a V_i$$
where each $V_i$ is a Galois twist of $V:=V_1$ and  $a=|\Gamma : \N_\Gamma(V)|$ with $\Gamma=\Gal(L,K)$. Since $V$ cannot be written over a subfield of $L$,  \cite[(26.5)]{asch} implies that $\N_\Gamma(V)=1$ and so $a=f$.

We now use the Brauer-Nesbitt Theorem as formulated in \cite[Section 5.3]{colva} and borrow the notation   established there. Since $V$ is  irreducible,   \cite[Theorem 5.3.2]{colva} states that  $V=M(n)$ for some integer $n$ with $0 \leqslant n \leqslant 2^e-1$. Further, since $V$ is written over $L$, there are $0\leqslant a_0,\dots, a_{e-1} \leqslant 1$ such that $n=\sum_{i=0}^{e-1} a_i 2^i$ and we have $V=M(a_0)\otimes M(a_1)^{\phi} \otimes \dots \otimes M(a_{e-1})^{\phi^{e-1}}$ and $\dim_L(V)=(a_0+1)(a_1+1) \cdots (a_{e-1}+1)$. Since $V$ is non-trivial we have $n \geqslant 1$, so there is some $i$ such that $a_i=1$. Now  since $V$ is written over $L$, \cite[Corollary 5.3.3]{colva} gives that $a_i=a_j$ if $i \equiv j \mod f$. Hence  $\dim_L(V) \geqslant 2^{\frac{e}{f}}$ and we obtain
\begin{equation*}\dim_K(W) = \dim_L(W^L)=   \dim_L(V)f \geqslant 2^{\frac{e}{f}}f \geqslant 2 \frac{e}{f}f=2e.  \qedhere\end{equation*} 
\end{proof}

For a  group $A$, the largest normal $2$-subgroup of  $A$ is denoted $\OO_2(A)$.

\begin{prop}
\label{lem:2power}
Let $A$ be a group containing a non-abelian simple subgroup $G$. If $|A:G|$ is a power of $2$, then either
\begin{enumerate}
\item  $A/\OO_2(A)$ is almost simple with socle isomorphic to $G$, or 
\item $G\cong\Alt(2^n-1)$ and $A/\OO_2(A)$ is isomorphic to $\Alt(2^n)$ or $\Sym(2^n)$,  where $n\geq 3$.
\end{enumerate}

\end{prop}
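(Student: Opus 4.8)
The plan is to reduce to the case $\OO_2(A)=1$ and then pin down the socle of $A$. First, since $\OO_2(A)\trianglelefteq A$, the intersection $G\cap\OO_2(A)$ is a normal subgroup of the simple group $G$; as $G$ is non-abelian (hence not a $2$-group) this forces $G\cap\OO_2(A)=1$. Thus $\bar G:=G\OO_2(A)/\OO_2(A)\cong G$ is a non-abelian simple subgroup of $\bar A:=A/\OO_2(A)$, its index $|\bar A:\bar G|=|A:G\OO_2(A)|$ divides $|A:G|$ and so is a power of $2$, and $\OO_2(\bar A)=1$. Replacing $(A,G)$ by $(\bar A,\bar G)$, I may therefore assume $\OO_2(A)=1$ and prove that $A$ is almost simple with socle isomorphic to $G$, or that $A\cong\Alt(2^n)$ or $\Sym(2^n)$ with $G\cong\Alt(2^n-1)$.

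Next I would locate $G$ inside the socle. Let $M$ be a minimal normal subgroup of $A$. Since $M\cap G\trianglelefteq G$, either $G\le M$ or $M\cap G=1$; in the latter case $|M|=|MG:G|$ divides the $2$-power $|A:G|$, so $M$ is a normal $2$-subgroup, contradicting $\OO_2(A)=1$. Hence $G\le M$. As any two distinct minimal normal subgroups intersect trivially and each contains $G\ne1$, the subgroup $M$ is the unique minimal normal subgroup of $A$; consequently every nontrivial normal subgroup contains $M$, so $C_A(M)=1$ (otherwise $M\le C_A(M)$ and $M$ is abelian, impossible as $G\le M$ is non-abelian) and $M=T_1\times\cdots\times T_m$ with the $T_i$ isomorphic non-abelian simple groups.

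The crux is to show $m=1$. Because each $T_i\trianglelefteq M$ and $G\le M$, the group $G$ normalises $T_i$, so $G\cap T_i\trianglelefteq G$ is trivial or all of $G$; if $G\le T_i$ then $|T|^{m-1}=|M:T_i|$ divides the $2$-power $|M:G|$, forcing $m=1$. Otherwise each projection $\pi_i\colon G\to T_i$ is injective, $G$ is a full diagonal subgroup of $\prod_i\pi_i(G)$, and $|M:G|$ equals $\prod_i[T_i:\pi_i(G)]\cdot|G|^{m-1}$; since $|G|^{m-1}$ has an odd prime divisor for $m\ge2$, this again contradicts $|M:G|$ being a power of $2$. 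Hence $m=1$, $M=T$ is simple, and $A$ embeds in $\Aut(T)$ with $T=\mathrm{Inn}(T)\le A$, so $A$ is almost simple with socle $T$ and $G\le T$ of $2$-power index. If $G=T$ we are in conclusion~(1). If $G<T$, then $G$ is a proper non-abelian simple subgroup of $T$ of $2$-power index, and here I would invoke Guralnick's classification of subgroups of prime-power index in finite simple groups: among the possibilities only those of $2$-power index survive, namely $T=\PSL(d,q)$ with a parabolic point/hyperplane stabiliser, or $T=\Alt(2^n)$ with $\Alt(2^n-1)$. The parabolic case is excluded because there $q$ is odd (otherwise the index is odd), so its unipotent radical $Q$ is a nontrivial normal subgroup of odd order with $Q\cap G=1$, whence $|Q|=|QG:G|$ divides $|T:G|$, impossible as $|Q|$ is odd and greater than $1$. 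Hence $T=\Alt(2^n)$ and $G\le\Alt(2^n-1)$. A final application of the same result to $\Alt(2^n-1)$ (whose degree $2^n-1$ is odd, hence not a $2$-power, and which is none of the sporadic alternating-linear isomorphisms) shows it has no proper subgroup of $2$-power index, so $G=\Alt(2^n-1)$; since $2^n\ge8$ we have $\Aut(\Alt(2^n))=\Sym(2^n)$, giving conclusion~(2).

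The main obstacle is this last step: reducing the almost-simple case to the two listed outcomes genuinely relies on the classification of finite simple groups through Guralnick's theorem, and one must check that the non-alternating families (parabolics in $\PSL$, together with the sporadic index-$11$, $23$, $27$ examples) either have odd index or fail to contain a simple subgroup of $2$-power index. Everything preceding it is elementary group theory about minimal normal subgroups and subdirect products; the only delicate point there is the parity bookkeeping that forces $m=1$ and eliminates the parabolic case.
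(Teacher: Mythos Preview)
Your approach matches the paper's: reduce to $\OO_2(A)=1$, show $G$ lies in the unique minimal normal subgroup $M=T_1\times\cdots\times T_m$, prove $m=1$, and invoke Guralnick. One gap: from $G\cap T_i=1$ for all $i$ you cannot conclude that every projection $\pi_i|_G$ is injective (consider $G$ embedded diagonally in $T_1\times T_2$ and trivially in $T_3$); the kernel of $\pi_i|_G$ is $G\cap\prod_{j\ne i}T_j$, not $G\cap T_i$. The paper sidesteps this detour by simply reusing the argument that placed $G$ inside $M$: since $T_1\trianglelefteq M$ and $|M:G|$ is a $2$-power, $G\cap T_1=1$ would force $|T_1|$ to divide $|M:G|$, impossible as $T_1$ is non-abelian simple; hence $G\le T_1$ directly, and $m=1$ follows at once. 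Your subsequent Guralnick case-analysis (eliminating the parabolic case via its odd-order unipotent radical, and then applying the result once more inside $\Alt(2^n-1)$) is more explicit than the paper, which simply cites Guralnick's theorem for the conclusion $T\cong\Alt(2^n)$, $G\cong\Alt(2^n-1)$.
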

\begin{proof}
 Note that $A/\OO_2(A)$ also satisfies the hypothesis hence we may assume that $\OO_2(A)=1$. Let $N$ be a minimal normal subgroup of $A$ and let $p$ be an odd prime that divides $|N|$. Since $|A:G|$ is a power of $2$, $G \cap N \neq 1$ and, since $G$ is simple, $G \leqslant N$.  Since distinct minimal normal subgroups intersect trivially, this shows that $N$ is the unique minimal normal subgroup of $A$. It also follows that $N$ is non-abelian hence $N=T_1 \times \dots\times T_k$ where  $T_i \cong T$ for some non-abelian simple group $T$. Since $T_1$ is a minimal normal subgroup of $N$,  and has order divisible by $p$, the same argument as above gives $G\leqslant T_1$. Since $|A:G|$ is a power of 2, it follows that $|N:T_1|=|T|^{k-1}$ is a power of $2$ hence $k=1$, $N$ is simple  and $A$ is almost simple. If $N=G$, then (1) holds. Otherwise, $G<N$ and~\cite[Theorem 1]{guralnick} implies that $N\cong\Alt(2^n)$ and $G\cong\Alt(2^n-1)$ with $n\geqslant 3$.
\end{proof}

\begin{thm}\label{thm:simpleNoFour}
Non-abelian simple groups with  no element of order four are CCA.
\end{thm}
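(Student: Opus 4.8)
The plan is to fix an arbitrary connected Cayley graph $\Gamma=\Cay(G,S)$, put $A=\Aut_c(\Gamma)$, and prove that $G_R\trianglelefteq A$; since this is equivalent to $\Gamma$ being CCA and $\Gamma$ is arbitrary, it yields that $G$ is CCA. By Walter's classification (Theorem~\ref{WalterTheorem}) we may assume $G$ is one of $\PSL(2,2^e)$, $\PSL(2,q)$ with $q\equiv\pm3\pmod 8$, a Ree group ${}^2\G_2(3^{2n+1})$, or $J_1$. As $\Gamma$ is connected, Lemma~\ref{VxStab2Group} shows the vertex-stabiliser $A_v$ is a $2$-group, so $|A:G_R|=|A_v|$ is a power of $2$ and Proposition~\ref{lem:2power} applies with $G_R$ in the role of $G$. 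Conclusion~(2) there would force $G\cong\Alt(2^n-1)$ with $n\ge3$, which is impossible since such a group contains the order-four element $(1\,2\,3\,4)(5\,6)\in\Alt(7)\le\Alt(2^n-1)$; hence $\overline A:=A/\OO_2(A)$ is almost simple with socle $\overline G\cong G$, and $\overline G$ is the image of $G_R$.

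Next I analyse the block system coming from $N:=\OO_2(A)$. Let $K$ be the kernel of the action of $A$ on the $N$-orbits, so $N\le K$. Because $G$, and hence $S$, has no element of order four, Lemma~\ref{coolnew} gives $K_v=1$, i.e.\ $K$ is semiregular; in particular $N$ is semiregular and every block has size $|N|$. Since $K$ fixes each block setwise and acts semiregularly, its orbits inside a block have size $|K|$ and partition a set of size $|N|$, forcing $|K|\le|N|$ and therefore $K=N$. Thus $\overline A$ acts faithfully on the blocks; moreover $G_R\cap N=1$ (as the non-abelian simple group $G$ has no non-trivial normal $2$-subgroup), so a block stabiliser in $G_R$ is a $2$-subgroup of $G$ of order $|N|$, giving $|N|\le|\mathrm{Syl}_2(G)|$. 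Writing $\widehat G=G_RN$ for the preimage of $\overline G$, we have $\widehat G\trianglelefteq A$ and $\widehat G=N\rtimes G_R$, so conjugation makes the Frattini quotient $N/\Phi(N)$ into an $\FF_2G$-module of dimension at most $\log_2|\mathrm{Syl}_2(G)|$.

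The heart of the argument is to show this module is trivial. A non-trivial $\FF_2G$-module is faithful (as $G$ is simple) and has a non-trivial irreducible composition factor (otherwise $G$ would map into a unipotent, hence nilpotent, group, contradicting perfectness), so it suffices to bound the dimension of such a factor below. For $G=\PSL(2,2^e)$ this is precisely Lemma~\ref{lem:sl2mods}, which gives the bound $2e$; since $|\mathrm{Syl}_2(G)|=2^e$ we have $\dim_{\FF_2}(N/\Phi(N))\le e<2e$, so $N/\Phi(N)$ is trivial. For the other three families the Sylow $2$-subgroups are tiny---of order $4$ for $\PSL(2,q)$ and order $8$ for the Ree groups and $J_1$---so the module has dimension at most $2$ or $3$ respectively; but these groups are far too large to embed in $\GL(2,2)\cong\Sym(3)$ or $\GL(3,2)$ (of orders $6$ and $168$), so they have no non-trivial $\FF_2$-module of such small dimension, and again $N/\Phi(N)$ is trivial. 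In every case $G$ therefore acts trivially on $N/\Phi(N)$, so its image in $\Aut(N)$ lies in the stability group, which is a $2$-group; as $G$ is perfect this image is trivial, i.e.\ $G_R$ centralises $N$. Then $\widehat G=N\times G_R$ and $G_R=\widehat G'$ is characteristic in $\widehat G\trianglelefteq A$, whence $G_R\trianglelefteq A$, as required.

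I expect the $\PSL(2,2^e)$ family to be the real obstacle. For the other three families the Sylow $2$-subgroup is so small that the crude bound ``too large to embed in $\GL(\dim,2)$'' already works, but for $\PSL(2,2^e)$ the group $\GL(e,2)$ is far larger than $\PSL(2,2^e)$, so that bound is useless; one genuinely needs the sharp minimal-dimension estimate $2e$ from Lemma~\ref{lem:sl2mods} to beat the Sylow bound $e$. The remaining steps---identifying $K=N$ and extracting the module structure on $N/\Phi(N)$---are routine bookkeeping once the three black-box lemmas are in hand.
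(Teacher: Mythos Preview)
Your proof is correct and follows essentially the same approach as the paper: pass to $N=\OO_2(A)$, use Lemma~\ref{coolnew} to get $K=N$, bound $|N|$ by the Sylow $2$-order of $G$, and then rule out a non-trivial $\FF_2G$-module on $N/\Phi(N)$ via Lemma~\ref{lem:sl2mods} for $\PSL(2,2^e)$ and crude order comparisons with $\GL(d,2)$ for the remaining Walter groups. One minor slip: your claim $G_R=\widehat G'$ fails if $N$ is non-abelian (then $\widehat G'=N'\times G_R$), but $G_R$ is still characteristic in $N\times G_R$---for instance as $O^2(\widehat G)$ or as the last term of the derived series---so the conclusion is unaffected.
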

\begin{proof}
Let $G$ be a non-abelian simple group without elements of order four, let $\Gamma$ be a  connected Cayley graph on $G$ and let $A$ be the colour-preserving group of automorphisms of $\Gamma$.  Let $N=\OO_2(A)$ and let $K$ be the kernel of the action of $A$ on the set of $N$-orbits. Since $G$ has  no element of order four, Lemma~\ref{coolnew} implies that for all $v\in \Gamma$ we have $K_v=1$ and hence $K=N$. By Lemma~\ref{VxStab2Group}, $|A:G|$ is a power of $2$. Since $G$ has no element of order four, Proposition~\ref{lem:2power} implies that $NG$ is normal in $A$. We claim that $G$ centralises $N$. 

Suppose otherwise, and note therefore that $G$ acts faithfully on $N$, and therefore on $N/\Phi(N)$ (where $\Phi(N)$ denotes the Frattini subgroup of $N$), and we may identify $G$ with a subgroup of $\Aut(N/\Phi(N))\cong \GL(d,2)$ for some $d\in \mathbb N$. Let $P$ be a Sylow $2$-subgroup of $G$. Since $K_v=1$ for all $v\in \Gamma$,  $|N|$ is the size of an $N$-orbit hence $|N|$ divides $|\Gamma|=|G|$ and thus $|N|$ divides $|P|$. Note that $P$ must be elementary abelian since $G$ has  no element of order four, and $G$ must appear in Theorem~\ref{WalterTheorem}. Suppose that $G$ is not isomorphic to $\PSL(2,2^n)$. Then $|N|\leq|P|\leq 8$ and so $d\leqslant 3$. However $G$ is not a subgroup of $\GL(3,2)$, a contradiction. We may thus assume that $G\cong\PSL(2,2^n)$ and $|N|\leq |P|= 2^n$ so $d\leqslant n$. By Lemma~\ref{lem:sl2mods},  the smallest faithful representation for $\PSL(2,2^n)$ over $\FF_2$ is of dimension $2n$, so $d\geqslant 2n$, a contradiction. 

We have shown that $G$ centralises $N$, hence $N G = N \times G$ and  $G$ is characteristic in $NG$ which is normal in $A$. It follows that $G$ is normal in $A$ and hence $\Gamma$ is CCA. This concludes the proof.
\end{proof}

\section{Simple groups with elements of order four}\label{sec:SimpleWith}

In this section we complete the proof of Theorem~\ref{thm:simplegrouptheorem}  by showing that simple groups with elements of order four are non-CCA. We use the Classification of Finite Simple Groups and simply consider each family of groups in turn (ignoring those that appear in Theorem~\ref{WalterTheorem}).


\subsection{Alternating groups}

The idea of the proof for the alternating groups is used for each simple group considered in the rest of this section. Let $G=\Alt(n)$. Since $\Alt(5)$ does not have an element of order four, we may assume that $n\geq 6$.
Let $t=(1~2)(3~4~5~6)$, $\tau=t^2$ and  $H = G_1 \cong \Alt(n-1)$. We claim that $(S_H(\tau),t,\tau)$ is a non-CCA triple of $G$. Since $H$ is maximal in $G$ and $t\notin H$, we have $G= \langle S_H(\tau), t\rangle$, so (Ai) holds. By Lemma~\ref{lem:stau}, (Aii) holds. By definition of $\tau$, (Aiii) holds (where we  take $T=\{t\}$). By definition, $S_H(\tau) \subseteq H$ and $\tau \in S_H(\tau)$, so $\langle S_H(\tau) \rangle \leqslant H $ and (Aiv)  holds (in fact, Remark~\ref{rem:invs} shows that $\langle S_H(\tau) \rangle =H$). Finally, (Av) is clear as $G$ has trivial centre.  Hence, Proposition~\ref{prop:gensets} shows that $G$ is non-CCA.

\begin{rem}

One can also prove that $\Sym(n)$ is non-CCA for $n\geq 5$. In fact, the same proof as above works for $n\geqslant 6$. For $n=5$ we take $t= (1~4~2~5) \text{, and } \tau = t^2$ and $H=\Sym(5)_{\{4,5\}}\cong \Sym(3)$. Then $(S_H(\tau),t,\tau)$ is a non-CCA triple of $G$.

By Theorem~\ref{thm:simpleNoFour}, $\Alt(5)$ is CCA. Moreover, $\Alt(n)$ is CCA for $n\leq 4$, whereas $\Sym(4)$ is not CCA, but $\Sym(3)$ and $\Sym(2)$ are CCA (see for example~\cite{FirstCCA}). One can also check that almost simple groups with socle $\Alt(6)$ are not CCA (using {\sc Magma} \cite{magma}, for example).

These results include every almost simple group whose socle is alternating. In each case, a group is CCA if and only if it does not contain an element of order four.
\end{rem}



\subsection{Chevalley groups}
 We now turn to the Chevalley groups (also called untwisted groups of Lie type).
Most of the families can be dealt with in a uniform manner, but to do this we require some setup. Our approach is to use the Chevalley presentation. We refer (and recommend) the reader to \cite{carter} or \cite{GLS3} for a more detailed exposition. In particular, all details in the following paragraphs are found in  \cite[Section 2.4]{GLS3}. First, we recall the notation. Let $G=X_n(q)$ be a \emph{Chevalley group} where $q=p^f$ for a prime $p$, $X \in \{\A,\B,\C,\D,\E,\F,\G\}$ and $n$ is a positive integer, with $n\geqslant 1$ if $X=\A$, $n\geqslant 2$ if $X=\B$, $n\geqslant 3$ if $X= \C$, $n\geqslant 4$ if $X = \D$, $n \in \{6,7,8\}$ if $X=\E$, $n=4$ if $X=\F$ and $n=2$ if $X=\G$. 
Associated to $G$ is a root system $\Phi$ (a set of vectors in a vector space associated to $G$)  with fundamental system $\Pi$ so that $\Phi = \Phi^+ \cup \Phi^-$ with respect to $\Pi$ (that is, each vector in $\Phi$ can be written as either a positive  or a negative linear combination of elements of $\Pi$). For explicit models of the root systems see \cite[Remark 1.8.8]{GLS3}. For each $\alpha\in \Phi$ we have  homomorphisms   $x_\alpha : (\FF_q,+) \rightarrow G$ and $h_\alpha : (\FF_q-\{0\},\times) \rightarrow G$. The \emph{root subgroups} of $G$ are $X_\alpha = \langle x_\alpha ( \eta ) \mid \eta \in \FF_q\rangle$. Finally, there are elements $n_\alpha \in G$ for each $\alpha \in \Phi^+$.

With this notation in hand, we set 
\begin{eqnarray*}
U & =&  \langle  x_\alpha(\eta) \mid \alpha \in \Phi^+, \eta \in \FF_q \rangle,\\
 H & =&   \langle  h_\alpha(\lambda) \mid \alpha \in \Phi^+, \lambda \in \FF_q - \{0\} \rangle = \langle  h_\alpha(\lambda) \mid \alpha \in \Pi, \lambda \in \FF_q - \{0\} \rangle  \\
 N & = &  \langle H , n_\alpha \mid \alpha \in \Phi^+ \rangle.
 \end{eqnarray*}
  Then $U$ is a Sylow $p$-subgroup of $G$, $H$ normalises $U$ and $B=UH$ is the normaliser in $G$ of $U$. Further, $H= B \cap N$  is abelian and $H$ is normal in $N$.  The Weyl group of $G$ is  $W :=N/H$,   with generators $s_\alpha:=Hn_\alpha$ such that $\alpha \in \Pi$. The Weyl group of $G$ acts faithfully on the root system $\Phi$, and we write $s_\alpha(\beta)$ for the image of $\beta \in \Phi$ under the Weyl group element $s_\alpha$.   The Chevalley Relations \cite[Theorem 2.4.8]{GLS3} give a presentation for the group $X_n(q)$ in terms of the elements of $U$, $H$ and $N$  as follows. For $\alpha,\beta \in \Phi$ with $\beta \neq \pm \alpha$, if $\alpha + \beta \notin \Phi$, then $[X_\alpha,X_\beta ] = 1$ and if $\alpha +\beta \in \Phi$, then the Chevalley Commutator Formula \cite[Theorem 2.4.5]{GLS3} allows us to calculate $[X_\alpha,X_\beta]$. The action of $N$ on the root subgroups is given by $$x_\beta(\eta )^{n_\alpha} = x_{s_\alpha(\beta)}(\pm \eta) \quad \text{and}\quad
x_\beta(\eta)^{h_\alpha(\lambda)} = x_\beta(\eta \lambda^{A_{\beta\alpha}})$$
where the constants $A_{\beta\alpha}$ are found in the Cartan matrix of $\Phi$, which we again do not dwell upon. Finally, the action of $N$ on $H$ is given by 
$ h_\beta( \lambda )^{n_\alpha} = h_{s_\alpha(\beta)}( \lambda)$ and we mention that $(n_\alpha)^2 = h_\alpha(-1)$ for all $\alpha \in \Phi^+$.


\subsubsection{\mathversion{bold}$X_n\notin\{\A_1,\G_2\}$}
We are now ready to deal with most Chevalley groups, except for two families of low rank.

Since the rank of $G$ ($=|\Pi|$) is at least two and since $G$ is not of type $\G_2$, we may pick simple roots $\alpha, \beta \in \Pi$ such that with $\gamma:=s_\alpha(\beta)$ we have $\beta + \gamma \notin \Phi$. More precisely, if  $G$ is not of type $\B_2$, the simple roots corresponding to nodes $1$ and $2$ of the Dynkin diagram of $G$ (as labelled in \cite[pg.12]{GLS3}) have this property. If $G$ is of type $\B_2$ we set $\alpha$ to be the short root, so that $s_\alpha(\beta)=2\alpha+\beta$. Set $J=\Pi-\{\alpha\}$ and let $P_J = \langle B, n_\gamma \mid \gamma \in J \rangle$ be the maximal parabolic  subgroup corresponding to $J$ ($P_J$ is a maximal subgroup of $G$ by \cite[(43.7)]{asch}).

Suppose first that $q$ is even. In this case, we have that  $\{n_\gamma \mid \gamma \in \Pi\}$ is a set of  involutions in $G$ and we let $t = x_\beta(1)n_\alpha$. Then 
$$\tau:=  t^2 = x_\beta(1) x_\beta(1)^{n_\alpha} = x_\beta(1) x_{s_{\alpha}(\beta)}(1)=x_\beta(1)x_\gamma(1).$$
Since $\beta + \gamma \notin \Phi$, $x_\beta(1)$ and $x_\gamma( 1)$ commute, and since $q$ is even, we have $\tau^2 = 1$. Note that $\tau \in P_J$ since $\tau \in U$, but $t \notin P_J$ since $n_\alpha \notin P_J$.  Let $S:= \langle S_{P_J}(\tau)\rangle$. We claim that $P_J$ is the unique maximal subgroup of $G$ containing $S$. Indeed, suppose that $M$ is a maximal subgroup of $G$ containing $S$. First note that since $q$ is even the elements $x_\delta(\eta)$ for $\eta \in \FF_q$ and $\delta \in \Phi$ are involutions, thus since $S$ contains each involution in $P_J$,  $S$ contains $U$. Hence by \cite[Theorem 2.6.7]{GLS3}, $M$ must be a parabolic  subgroup containing $B$. Since $S$ contains all of the involutions $n_\delta$ for $\delta \in J$, we have $Bn_\delta \subset M$. Hence $P_J=\langle B, n_\gamma \mid \gamma \in J \rangle \leqslant M$   which forces $P_J=M$ since $P_J$ is maximal. This proves the claim. Now it is easy to verify that $(S_{P_J}(\tau),t,\tau)$ is a non-CCA triple of $G$.

Suppose now that $q$ is odd. Set $t =n_\alpha$, so that (by \cite[Remark 2.4.0(c)]{GLS3} and \cite[Theorem 1.12.1k]{GLS3})  we have   $\tau := t^2 =h_\alpha(-1) \neq 1$ (note that when  $G$ is of type $\B_2$, this is due to our choice of $\alpha$). Let $S=\langle S_{P_J}(\tau)\rangle$. Let $\gamma \in \Pi$ be arbitrary and let $\eta \in \FF_q$. We have 
$$(x_\gamma(\eta))^\tau = x_\gamma((-1)^{A_{\gamma \alpha}}\eta) = x_\gamma(\pm \eta) = (x_\gamma(\eta))^{\pm 1}.$$
In particular, $\tau$ inverts or centralises each generator of $U$ and so  $U \leqslant   S $. If $n_\beta$ is an involution in $G$ (when $G$ is of type $\B_2$ for example) then $n_\beta \in S$. If  $n_\beta$ has order four, then $(n_\beta)^2 =h_\beta(-1)$ and   (recalling that $H$ is abelian)
\begin{eqnarray*}
(\tau n_\beta)^2 & =&  h_\alpha(-1)n_\beta^2 h_\alpha(-1)^{n_\beta} \\
& =&  h_\alpha(-1)h_\beta(-1) h_{s_\beta(\alpha)}(-1) \\
& =&  h_\alpha(-1)h_\beta(-1) h_{\alpha+\beta}(-1) \\
&=&  (h_\alpha(-1)h_\beta(-1) )^2 \\
& =& 1
\end{eqnarray*}
where the second to last equality holds by \cite[Theorem 2.4.7]{GLS3}. Hence $\tau n_\beta$ is an involution in $P_J$, and so $n_\beta \in S$ since $\tau\in S$. Next, if  $\gamma \in \Pi$ with $\alpha \neq \gamma  \neq \beta$, then $n_\gamma$ commutes with $h_\alpha(-1)=\tau$, so that $n_\gamma \in S$. Hence, arguing as above, if $M$ is a maximal subgroup of $G$ containing $S$, then $M=P_J$. In particular, $(S_{P_J}(\tau),t,\tau)$ is a non-CCA triple of $G$, so Proposition~\ref{prop:gensets} completes the proof  in this case.

We now turn to the excluded families.

\subsubsection{\mathversion{bold}$X_n=\A_1$}
We now assume that $G=\A_1(q)\cong \PSL(2,q)$. In view of Theorem~\ref{WalterTheorem}, we can assume that $q \equiv 1, 7 \pmod 8$ and, taking into account exceptional isomorphisms, that $q\notin \{7,9\}$.
In this case, $G$ has one conjugacy classes of involutions, so each involution in $G$ is a square. Further,  $G$ has maximal subgroups $M_1$ and $M_2$ isomorphic to $\D_{q+1}$ and $\D_{q-1}$. Depending on $q$, we may pick $i\in \{1,2\}$ and an involution $\tau$ in $M_i$ such that $\tau$ is a non-square in $M_i$  and $M_i = \langle S_{M_i}(\tau)\rangle$. Since $\tau$ is a square in $G$, but not in $M_i$, there is $t  \in G$ such that $t^2 = \tau$ and $t\notin M_i$. Hence $(S_{M_i}(\tau),t,\tau)$ is a non-CCA triple of $G$, and Proposition~\ref{prop:gensets} completes the proof  in this case.

\subsubsection{\mathversion{bold}$X_n=\G_2$}

Finally, we assume that $G=\G_2(q)$. We may assume that $q\geq 3$.
If $q$ is odd, then $G$ has a unique conjugacy class of involutions \cite[Theorem (4.4)]{chang}, so each involution in $G$ is a square. If $q$ is even, then $G$ has two conjugacy classes of involutions \cite[Proposition 2.6]{enomoto} and one of the classes consists of squares -- using notation of loc.~cit.~the  involution $x_3$ is the square of $x_5$, for example.  We consult \cite[Table 4.1]{Wilson} for the maximal subgroups of $G$. If $q$ is odd, 
let $M$ be a maximal subgroup   with $M \cong \SL(3,q) : 2$ and let  $\tau$ be  an involution in $M$ that does not lie in the  $\SL(3,q)$ subgroup. If $q$ is even, 
 let $M$ be a maximal subgroup with $M \cong \PSL(2,q)\times \PSL(2,q)$ and let $\tau$ be an involution in $M$ which is a conjugate of $x_3$ (the first paragraph of \cite[Section 4.3.6]{Wilson} shows that $M$ contains such an involution).   If $q$ is even, the structure of $M$ shows that $\tau$ is not a square in $M$   and, if $q$ is even, then the Sylow 2-subgroups of $M$ are elementary abelian, so $\tau$ is not a square in $M$. Thus, in either case, there is $t\in G$ such that $t^2 = \tau$. Now $t\notin M$ so  $\langle M , t \rangle = G$. 
In both cases, we have $ \langle S_M(\tau) \rangle = M$.   Thus $(S_M(\tau),t,\tau)$ is a non-CCA triple of $G$, so Proposition~\ref{prop:gensets} completes the proof  in this case.

\subsection{Twisted groups of Lie type}

Next, we deal with the so-called twisted (or ``Steinberg") groups. These groups arise as fixed points of the so-called \emph{graph-field automorphisms} of the Chevalley groups, which exist whenever the associated Dynkin diagrams admit a graph automorphism (of order $d$ if the group is ${}^dX_n(q)$).

We start with the case $d=2$ that is, groups of the form  ${}^2\A_n(q) \cong \PSU(n+1,q)$, ${}^2 \D_n(q) \cong \mathrm P \Omega^-(2n,q)$ ($n\geqslant 4$) and ${}^2 \E_6(q)$. 
Below we have shown the Dynkin diagrams in such a way that the orbits of the graph automorphism are clear.

\begin{figure}[h]
\includegraphics[scale=0.6]{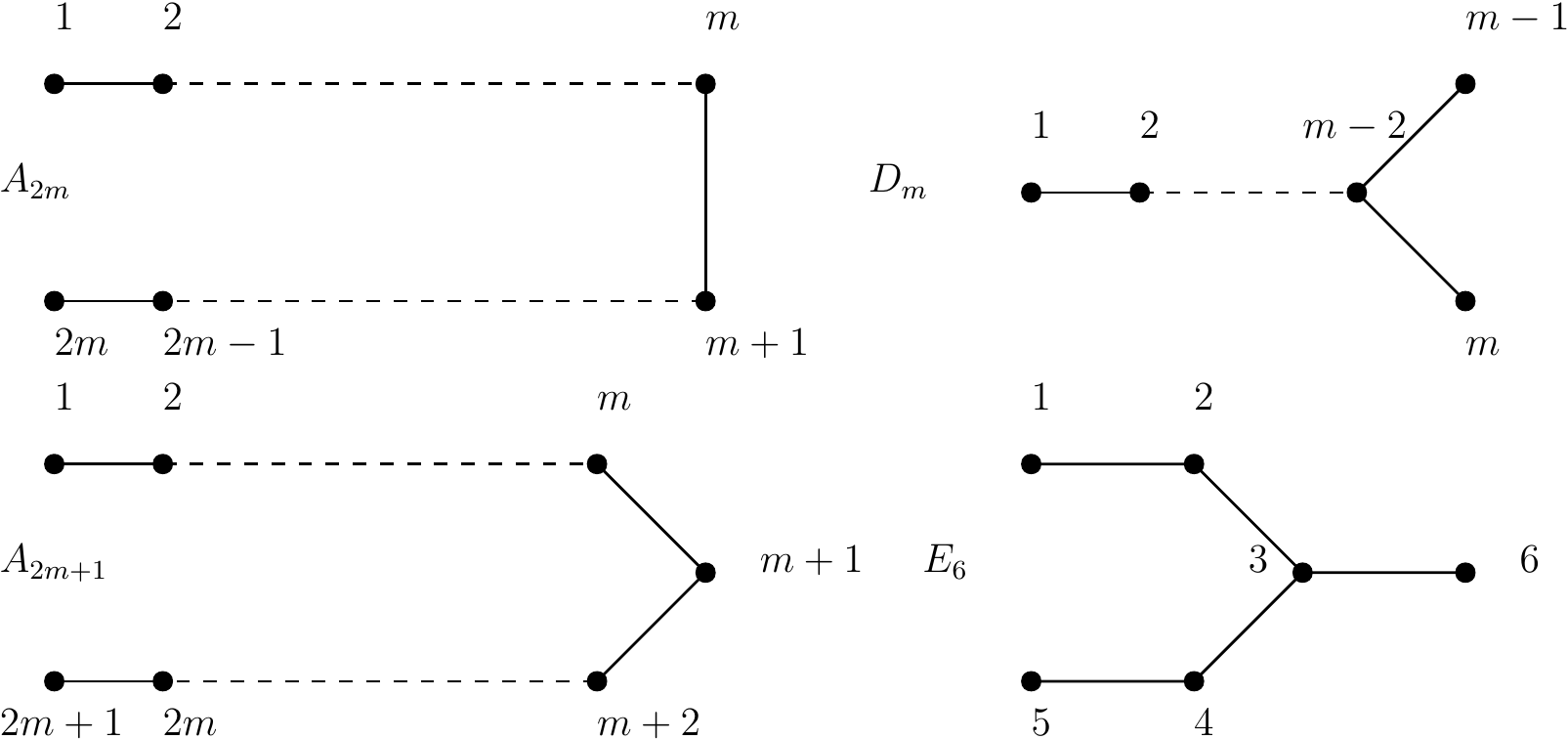}
\end{figure}

A twisted group ${}^2X_n(q)$ admits a \emph{twisted} root system. These root systems are in fact equivalence classes of the images of roots from the corresponding untwisted group $X_n(q)$, see \cite[Definition 2.3.1]{GLS3}.  The graph automorphism of the Dynkin diagram extends to an isometry $\rho$ of the vector space associated to the untwisted group $X_n(q)$, and thus acts on the root system $\Phi$. The average of the roots in the orbit of $\alpha\in \Phi$  is denoted $\tilde{\alpha}$, and the set of these averages is denoted $\tilde{\Phi}$. We say two vectors of $\tilde{\Phi}$ are equivalent if  one is a  positive multiple of the other. The equivalence class of  $\tilde{\alpha}$ is denoted by $\hat{\alpha}$. Thus there are maps $\Phi \rightarrow \tilde{\Phi} \rightarrow \hat{\Phi}$, and the  set $\tilde{\Phi}$ is the twisted root system of the twisted group ${}^2X_n(q)$ (it may not be an actual root system). In the cases under consideration, $\tilde{\Sigma}=\hat{\Sigma}$, except for ${}^2 A_{2m}(q)$. The twisted group is generated by the root subgroups $\langle x_{\hat{\alpha}}(\eta) \mid \eta \in \FF_q \rangle$ for $\hat{\alpha} \in \hat{\Phi}$. The  definition of $x_{\hat{\alpha}}$ depends upon the orbit of $\alpha$ under $\rho$, and is found in \cite[Table 2.4]{GLS3}. Three possibilities arise for us, the orbit is a single vertex,  two disconnected vertices or  two vertices joined by an edge. The definition of $x_{\hat{\alpha}}$ is then found in Row I,  II or IV  respectively of \cite[Table 2.4]{GLS3}. (For example, in the case of ${}^2\D_4(q)$, we have $\alpha_3$ and $\alpha_4$ are in the same orbit, and $x_{\hat{\alpha_3}}(\eta) = x_{\alpha_3}(\eta)x_{\alpha_4}(\eta^q)$, see Row II of \cite[Table 2.4]{GLS3}.) There is an analogous definition of $h_{\hat{\alpha}}$ for $\hat{\alpha} \in \hat{\Phi}$, found in \cite[Table 2.4.7]{GLS3}. For the precise definition of $n_{\hat{\alpha}}$ we actually consult the proof of \cite[Theorem 2.4.8]{GLS3} (see also \cite[Remark 2.4.9(b)]{GLS3}) and conclude that either $\alpha$ is fixed by the isometry $\rho$ and we can take $n_{\hat{\alpha}}=n_\alpha$ or the orbit of $\alpha$ has length two. In even characteristic, $n_{\hat{\alpha}}$ always has order two and in odd characteristic $n_{\hat{\alpha}}$ has order four if ${\hat{\alpha}}$ is of type II, and has order two if $\hat{\alpha}$ is of type IV.  More specifically, when $\hat{\alpha}$ is of type II, we have  $n_{\hat{\alpha}}=h n_\alpha n_{\alpha^\rho}$ for some $h\in T_1$ (see loc.~cit.), where $h=1$ in even characteristic. If $\hat{\alpha}$ is of type IV, we have   $n _{\hat{\alpha}} = h n_{\alpha}n_{\alpha^\rho}n_{\alpha}=h n_{\alpha^\rho}n_{\alpha}n_{\alpha^\rho}$ for some $h\in T_1$ (see loc.~cit.), where again $h=1$ if the characteristic is even. We calculate that, if $n_{\hat{\alpha}}$ has order four, then (in all cases) $n_{\hat{\alpha}}^2 = h_{\hat{\alpha}}(-1)$.
 
\subsubsection{\textbf{\mathversion{bold}$^2\D_n(q)$ for $n\geq 4$ and $^2\E_6(q)$}}
Let $G={}^2X_n(q)$, where $X \in \{D,E\}$.
The idea of the proof is analogous to the untwisted case, but the setup requires a little more care because of the twist. First, we select two simple roots $\alpha, \beta \in \Pi$  that are fixed by the isometry $\rho$. In detail: if $X=D$ pick $\alpha=\alpha_1$, $\beta=\alpha_2$ and if $X=E$ pick $\alpha=\alpha_4$ and $\beta = \alpha_3$. The proof can now proceed exactly as for the untwisted groups. We provide some details to show  how the various steps may be adjusted. Let $\delta \in \{\alpha,\beta\}$. Since $\delta$ is fixed by the isometry $\rho$, any calculations involving $n_{\hat{\delta}}=n_{\delta}$ and $h_{\hat{\delta}}(\eta)=h_\delta(\eta)$ are the same as for the untwisted groups. Further, the calculations regarding $s_\alpha(\beta)$ and $\beta$ are the same as for the untwisted group. Finally, since all of the root subgroups are either of type I or II, the assertions concerning the structure of the Sylow $p$-subgroups (that it is generated by involutions when $q$ is even and that the generators are either inverted or centralised when $q$ is odd) are the same as for the untwisted groups. 

\subsubsection{\textbf{\mathversion{bold}$^2\A_n(q)$ for $n\geq 3$}}
 Let $n\geq 3$ and let $G={}^2\A_{n}(q)\cong \PSU(n+1,q)$.
For the unitary groups in odd dimension, the root subgroups can be of type IV and so, in even characteristic, are not necessarily generated by involutions. For this reason,  we use a different approach.
We first claim that there is a maximal parabolic subgroup $P$ stabilising a totally isotropic $1$-space or $2$-space and an element $t \in G - P$ of order four such that $\tau:=t^2 \in P$.

Suppose first that $q$ is odd. Since $n\geqslant 3$, the claim holds with $t=n_{\hat{\alpha}_1}$, $\tau=t^2 = h_{\hat{\alpha}_1}(-1)$ and $P$ the stabiliser of a totally isotropic  1-space corresponding to the first node of the Dynkin diagram for $A_n(q)$.

Suppose now that $q$ is even and assume first that $n \geqslant 4$. Set $w= n_{\hat{\alpha}_2}$, $x= x_{\hat{\alpha}_1}(1)$ and put $t=xw$. Since $q$ is even, $n_{\hat{\alpha}_2}$ is an involution and,  since  $n\geqslant 3$, the twisted root $\hat{\alpha}_1$ is of type II, so that both $x$ and $w$ are involutions. We set $\tau:=t^2   = xx^w$. To calculate $\tau$, there are three cases depending on the type of $\hat{\alpha_2}$. Since we assume $n\geqslant 4$, $\hat{\alpha_2}$ is of type II or IV.

If $\hat{\alpha_2}$ is of type II then necessarily $n \geqslant 5$ and  $w=n_{\alpha_2}n_{\alpha_{n-1}}$. Then
\begin{eqnarray*}
xx^w =& x_{\alpha_1}(1) x_{\alpha_{n}}(1) (x_{\alpha_1}(1))^{n_{\alpha_2}}(x_{\alpha_{n}}(1))^{n_{\alpha_{n-1}}}\\
= &   x_{\alpha_1+\alpha_2}(1) x_{\alpha_2}(1) x_{\alpha_{n}}(1)x_{\alpha_{n}+\alpha_{n-1}}(1).
\end{eqnarray*}
Note that since $n\geqslant 5$, each of the elements in the expression for $xx^w$ above commute, and so $\tau^2=1$ since $q$ is even. Hence the claim holds with $P$ the stabiliser of a totally isotropic $2$-space.

Suppose that $\hat{\alpha_2}$ is of type IV, so that $n=4$, $x=x_1(1)x_4(1)$ and $n_{\hat{\alpha_2}} = n_{\alpha_2}n_{\alpha_3}n_{\alpha_2}= n_{\alpha_3}n_{\alpha_2}n_{\alpha_3}$. Then 
\begin{eqnarray*}
xx^w = &   x_{\alpha_1}(1)    x_{\alpha_4}(1)     ( x_{\alpha_1}(1))^{n_{\alpha_2}n_{\alpha_3}}
(x_{\alpha_4}(1))^{n_{\alpha_2} n_{\alpha_3}}\\
= &   x_{\alpha_1}(1)    x_{\alpha_4}(1)   x_{\alpha_1+\alpha_2+\alpha_3}(1) x_{\alpha_2+\alpha_3+\alpha_4}(1).
\end{eqnarray*}
Using the Chevalley Commutator Formula,  we calculate that   $\tau^2 =1$. (Note that there are exactly two pairs of non-commuting elements in the expression for $\tau$, so  when calculating the square, we introduce twice the element $x_{\alpha_1+\alpha_2+\alpha_3+\alpha_4}(1)$ which is the commutator of both $x_{\alpha_1}(1)$ and $x_{\alpha_2+\alpha_3+\alpha_4}(1)$ and of $x_{\alpha_4}(1)$ and $x_{\alpha_1+\alpha_2+\alpha_3}(1)$.) Hence the claim holds with $P$ the stabiliser of a totally isotropic $2$-space in the 5-dimensional vector space on which $\SU(5,q)$ acts.

Now assume that $n=3$ and set $w=n_{\hat{\alpha_1}}$,  $x=x_{\hat{\alpha_2}}(1)=x_{\alpha_2}(1)$ and $t=xw$. Since $q$ is even, $w$ is an involution, so $\tau:=t^2 = xx^w$. Since $w = n_{\alpha_1}n_{ \alpha_3}$, we calculate that $\tau=x_{\alpha_2}(1)x_{\alpha_1+\alpha_2+\alpha_3}(1)$. Hence $\tau^2 =1$ and the claim holds with $P$ the stabiliser of a totally isotropic $1$-space in the 4-dimensional vector space on which $\SU(4,q)$ acts.

Set $S=\langle S_P(\tau) \rangle$ and let $X$ be the normal subgroup of $P$ generated by the involutions in $P$, so that $X \leqslant S$. We claim that $P$ is the unique maximal subgroup of $G$ containing $P$. The proof of this claim depends on the structure of $P$, for which we refer to \cite[Theorem 3.9(ii)]{Wilson} (noting that  $n-k$ should read $n-2k$). Let $P=QL$ be the Levi decomposition of $P$.

Let $M$ be a normal quasisimple subgroup of $L$, so that $M/Z(M)$ is non-abelian simple. We claim that  $M \leqslant S$.  Since $M$ is quasisimple, either $X \cap M \leqslant Z(M)$ or $X\cap M = M \leqslant S$. If the former holds, then $[X, M, M] =1 = [M,X,M]$ and so the Three Subgroups Lemma   gives $1=[M,M,X]=[M,X]$, where the last equality holds since $M$ is perfect. Since $\tau \in X$, this yields $M \leqslant C_P(\tau)$ and so $M \leqslant S$. Let $E$ be the product of the normal quasisimple subgroups of $L$, so that $E \leqslant S$.

Assume that $(n,q) \notin \{(3,2),(3,3),(5,2),(6,2)\}$. Then $QE$ contains a Sylow $p$-subgroup of $G$. Note that $ Q = [Q,E ] \leqslant [Q,X] \leqslant X$. Hence $QE \leqslant S$ and so $S$ contains a Sylow $p$-subgroup of $G$. By \cite[Theorem 2.6.7]{GLS3} the only maximal subgroup containing $S$ must therefore be a parabolic  subgroup, that is, a stabiliser of some totally isotropic subspace. Since $QE$ fixes a unique totally isotropic  subspace, 
the only maximal subgroup  of $G$ containing $S$ is $P$, as claimed.

For $(n,q) \in \{(3,2),(3,3),(5,2),(6,2)\}$, one can verify the claim directly (say, with {\sc Magma} \cite{magma}).

From the claim, it follows $(S_P(\tau),t,\tau)$ is a non-CCA triple of $G$, and so $G$ is non-CCA by Proposition~\ref{prop:gensets}.

\subsubsection{\mathversion{bold}$^2\A_2(q)$}
Let $G ={}^2\A_2(q) \cong \PSU(3,q)$ for $q$ a prime power.
Let $M \leqslant \SU(3,q)$ be the stabiliser of a non-degenerate direct sum decomposition of the natural vector space that $\SU(3,q)$ acts on. Then $M \cong (\C_{q+1})^2 \rtimes \Sym(3)$ and $M$ is maximal in $\SU(3,q)$ by \cite{colva}. Since $\Z(\SU(3,q)) \cong \C_{(3,q+1)}$ is contained in $M$, we have $H:=M/Z(\SU(3,q))$ is maximal in  $G$.  Pick $\tau$ to be an involution in a subgroup of $H$ conjugate to $\Sym(3)$. Then $\tau$ is a non-square in $H$. Since $G$ has elements of order four, and one conjugacy class of involutions (see \cite[Table 4.5.1]{GLS3} for $q$ odd and \cite[(6.1)]{aschseitz} for $q$ even) there is $t \in G-H$ such that $t^2=\tau$. 

Let $X:=\langle S_H(\tau) \rangle$. We claim that $H =X$. Note that $X$  is a normal subgroup of $H$ containing our chosen $\Sym(3)$ subgroup. Write $q+1=p_1^{a_1}\dots p_r^{a_r}$. Then,  for $p_i\neq 3$, we have $T_i=\mathrm O_{p_i}(H)$ and $\Sym(3)$ acts irreducibly on $T_i \cong (C_{p_i^{a_i}})^2$. This forces $T_i \leqslant X$. For $p_i=3$, $T_i \cong \C_{3^{a_i}} \circ_{3} \C_{3^{a_i}}$ (where the symbol $\circ_3$ means that a subgroup of order three has been identified). It follows that there are generators $x$ and $y$ for $T_i$ such that $x^\tau=x$ and $y^\tau = y^{-1}$. Thus $y=z\tau$ for some involution $z$ and so $T_i \leqslant X$. 

The previous two paragraphs show that  $(S_M(\tau),t,\tau)$ is a non-CCA triple of $G$ hence, by Proposition~\ref{prop:gensets}, $G$ is non-CCA.

\subsubsection{\mathversion{bold}$^3\D_4(q)$}

Let $G={}^3\D_4(q)$ for $q$ a prime power.
When $q$ is odd, $G$ has a unique conjugacy class of involutions by \cite[Lemma 2.3(i)]{Kl}, so  each involution is a square in $G$. When $q$ is even,  $G$ has two conjugacy classes of involutions \cite[(8.1)]{thomas}. Representatives are  $x_6(1)$ and $x_4(1)$ (using the notation of \cite{thomas}). Using \cite[(2.1)]{thomas}, we calculate that $x_6(1) = (x_2(1)x_5(1))^2$. For $\alpha \in \FF_{q^3}-\FF_q$, we find that the square of $x_1(\alpha)x_3(1)$ is conjugate to $x_4(1)$ by \cite[(3.1)]{thomas}. Hence, for all $q$, every involution in $G$ is a square. Now, by \cite[Theorem 4.3]{Wilson}, there is a maximal subgroup $M$  of the form $2 \cdot(\PSL(2,q^3) \times \PSL(2,q)): 2$ for $q$ odd and, for $q$ even, there is a maximal subgroup $M$ of the form $\PSL_2(q^3) \times \PSL_2(q)$. For $q$ odd, we let $\tau$ be an involution in $M$ outside the derived subgroup and, for $q$ even, we let $\tau$ be  an involution in $M$. In both cases, we have that $\tau$ is a square in $G$, but not in $M$, so there is $t\in G$ such that $t^2 =\tau$.  This case can now be finished by arguing as in the previous one.


\subsection{Suzuki-Ree groups}
\subsubsection{\mathversion{bold}${}^2\F_4(q)$}
Let $G$ be either a large Ree group ${}^2\mathrm F_4(q)$,  where $q=2^e\geqslant 4$ with $e$ odd, or the Tits group ${}^2 \F_4(2)'$.
We note that $G$ has two conjugacy classes of involutions by \cite[(18.2)]{aschseitz}. Consulting \cite{Parrott} and using notation of loc.~cit., we find representatives $x_{12}(1)$ and $x_{10}(1)$. We find $x_{12}(1) = x_5(1)^2 $ and $(x_4(1)x_2(1))^2=x_7(1)x_8(1)x_{11}(1)x_{12}(1)$. The latter element is conjugate to $x_7(1)$ by \cite[Section 2]{Parrott} and $x_7(1)$ is conjugate to $x_{10}(1)$ by \cite[Lemma 10]{Parrott}. Thus all involutions in $G$ are squares. By \cite[Theorem 4.5]{Wilson} there is a maximal subgroup $M \cong \Sp(4,q):2$. Let $\tau$ be an involution in $M$ that does not lie in the $\Sp(4,q)$ subgroup. Then $M= \langle S_M(\tau) \rangle$ since $M$ is almost simple (even for $q=2$). There is $t \in G$ such that $t^2 = \tau$ and $t\notin M$ since the structure of $M$ shows that $\tau$ is not a square in $M$. Then $(S_M(\tau),t,\tau)$ is a non-CCA triple of $G$, and Proposition~\ref{prop:gensets} completes the proof.

\subsubsection{\mathversion{bold}${}^2\B_2(q)$}

 Let $q=2^e \geqslant 8$ with $e $ odd. We will prove something slightly stronger than required, namely that, if $G$ is an almost simple group with socle ${}^2\B_2(q)$, then $G$ is non-CCA.
By \cite[Theorem 7.3.5]{colva} and \cite[Table 8.16]{colva}, the normaliser in $G$ of a maximal subgroup $M \cong \D_{q-1}$ of $\mathrm{soc}(G)$ is maximal in $G$. Now $\N_G(M) \cong C_{q-1} \rtimes (C_2 \times C_f)$ for some divisor $f$ of $e$.  Since $q$ is even, we may pick involutions $x$ and $\tau$ and an element $c \in G$ of order $f$ that commutes with $\tau$  such that $\N_G(M)= \langle \tau x, \tau, c \rangle$. Since all involutions in $G$ are squares (see the description of a Sylow $2$-subgroup of $G$ in \cite[Table 8.16]{colva}), there is $t\in G$ such that $t^2 = \tau$. Now $(\{\tau x, \tau,c \},\{t\},\tau)$ is a non-CCA triple, so Proposition~\ref{prop:gensets} completes the proof.



\subsection{Sporadic groups}
 To finish the proof of Theorem~\ref{thm:simplegrouptheorem}, only the  sporadic groups remain to be addressed. In view of Theorem~\ref{WalterTheorem}, we can ignore the Janko group $\mathrm J_1$.
Let $(G,H)$ be one of the pairs from the table below. That $H$ is a maximal subgroup of $G$ and other facts regarding properties of $G$ used below can be found in  \cite{atlas} or \cite{onlineatlas}.
\begin{center}
\begin{tabular}{ c c c }
\begin{tabular}{|c || c |}
\hline
G & H   \\ \hline \hline
$\mathrm M_{11}$ & $\PSL(2,11)$   \\
$\mathrm M_{22}$ & $\PSL(2,11)$ \\
$\mathrm M_{23}$  &  $\mathrm M_{11} $ \\
$\mathrm M_{24}$  &  $\PSL(2,7)$ \\
$\mathrm J_3$  &  $\PSL(2,19)$ \\ 
$\mathrm{McL}$  & $\mathrm M_{11} $ \\
$\mathrm{He} $ & $ 7^{1+2} : (3 \times \Sym(3)) $ \\
$\mathrm{Suz} $ & $ \Alt(7) $ \\
$\mathrm{Co}_2 $ & $ \mathrm M_{23} $ \\ 
\hline
\end{tabular}
&
\begin{tabular}{|c || c |}
\hline
G & H   \\ \hline \hline
$\mathrm{Co}_1 $ & $ \mathrm{Co}_2 $ \\
$\mathrm{HN}$  &  $\Alt(12) $ \\
$\mathrm{ON}$ & $\Alt(7) $ \\
$ \mathrm J_4$ & $ \mathrm \PSU(3,3) $ \\ 
$ \mathrm {Ly}$ & $37:18 $ \\
$ \mathrm{Th}$ & $\PSL(3,3) $ \\
$ \mathrm{Fi}'_{24}$ & $ 29:14 $ \\
$ \mathrm M$ & $ \PSL(2,59) $ \\
\hline
\end{tabular}
&
\begin{tabular}{|c || c |}
\hline
G & H   \\ \hline \hline
$\mathrm M_{12}$ & $\mathrm M_{11} $   \\
$\mathrm J_2$  &  $\PSU(3,3)$ \\
$\mathrm{HS}$  & $\mathrm M_{11} $ \\ 
$\mathrm{Ru} $ & $ \Alt(8) $ \\
$\mathrm{Co}_3 $ & $ \mathrm M_{23} $ \\
$ \mathrm{Fi}_{22}$ & $\mathrm M_{12} $  \\
$ \mathrm{Fi}_{23}$ & $\PSL(2,23) $  \\
$ \mathrm B$ & $\mathrm M_{11} $ \\
\hline
\end{tabular}
\end{tabular}
\end{center}
 For $X=H$ or $X=G$ and $\tau$ an involution of $X$ we define 
$$_X\sqrt{\tau}=\{ t \in X \mid t^2 = \tau \}$$
and we have  that 
$$|_X\sqrt{\tau}|=\sum_{\chi \in \mathrm{Irr}(X)}s(\chi)\chi(\tau)$$
where $s(\chi)$ is the Frobenius-Schur indicator of $\chi$. For both $G$ and $H$, the character tables are either stored in {\sc Gap} \cite{gap} or can be computed easily. We used {\sc Gap} to compute the values  of $|_X\sqrt{\tau}|$ for each conjugacy class of involutions in both $G$ and $H$. For $G$ in the left or middle table above we let $\tau$ be  an involution of $H$.  For $G$ in the right  table, let $\tau \in H$ be an involution with  $|_H\sqrt{\tau}| \neq 0$.   Our calculations then show that $|_G\sqrt{\tau}| \neq |_H\sqrt{\tau}|$ for the pair $(G,H)$.  Hence there is $t\in G - H$ such that $t^2 = \tau$. Since $\langle S_H(\tau) \rangle$ contains both $C_H(\tau)$ and the normal subgroup of $H$ generated by the involutions of $H$, we have $\langle S_H(\tau) \rangle = H$. Then it is easy to check that $(S_H(\tau),\{t\},\tau)$ is a non-CCA triple of $G$, and Proposition~\ref{prop:gensets} completes the proof. 


\bigskip

 \noindent\textsc{Acknowledgements.}
We would like to thank Michael Giudici for his helpful comments on an early version of this manuscript.

\end{document}